\newcommand{\be}{\begin{equation}}
\newcommand{\ee}{\end{equation}}
\newcommand{\beq}{\begin{eqnarray}}
\newcommand{\eeq}{\end{eqnarray}}
\newtheorem{prop}{Proposition}[section]
\newtheorem{remark}[prop]{Remark}
\def\begeq{\begin{equation}}
\def\endeq{\end{equation}}
\def\odot{\setbox0=\hbox{$\bigcirc$}\relax \mathbin {\hbox
to0pt{\raise.5pt\hbox to\wd0{\hfil $\wedge$\hfil}\hss}\box0 }}
\numberwithin{equation} {section}
\numberwithin{equation}{section}
\newtheorem{theorem}{\bf Theorem}[section]
\newtheorem{lemma}[theorem]{\bf Lemma}
\newtheorem{corollary}[theorem]{\bf Corollary}
\begin{document}

\title[Translating solutions for a class of quasilinear parabolic IBVPs]
 {Translating solutions for a class of quasilinear parabolic initial boundary value problems in Lorentz-Minkowski plane $\mathbb{R}^{2}_{1}$}

\author{
 Ya Gao,~~~Jing-Hua Li,~~~ Jing Mao$^{\ast}$}

\address{
Faculty of Mathematics and Statistics, Key Laboratory of Applied
Mathematics of Hubei Province, Hubei University, Wuhan 430062,
China. }

\email{Echo-gaoya@outlook.com, 1094545647@qq.com, jiner120@163.com}

\thanks{$\ast$ Corresponding author}

\date{}
\begin{abstract}
In this paper, we investigate the evolution of spacelike curves in
Lorentz-Minkowski plane $\mathbb{R}^{2}_{1}$ along prescribed
geometric flows (including the classical curve shortening flow or
mean curvature flow as a special case), which correspond to a class
of quasilinear parabolic initial boundary value problems, and can
prove that this flow exists for all time. Moreover, we can also show
that the evolving spacelike curves converge to a spacelike straight
line or a spacelike Grim Reaper curve as time tends to infinity.
\end{abstract}

\maketitle {\it \small{{\bf Keywords}: Mean curvature flow,
spacelike curves, Lorentz-Minkowski plane, Neumann boundary
condition.}

{{\bf MSC 2020}: 35K20, 53B30.}}

\section{Introduction}

To our knowledge, the start of the study of mean curvature flow (MCF
for short) maybe is due to Brakke \cite{kab} where he used the
geometric measure theory to investigate the motion of surface by its
mean curvature, while Huisken \cite{gh1} (for higher dimensional
case), Gage-Hamilton \cite{ghr} and Grayson \cite{mag} (for lower
dimensional case\footnote{The curve shortening flow (CSF for short)
is the MCF in a prescribed ambient space of dimension $2$, that is,
the CSF is essentially the lower dimensional case of MCF.}) gave
pioneering contributions to this theory, and after that many
interesting related conclusions (or improvements) have been
obtained. BTW, we would like to refer books \cite{cz2,ygi,cm} and
references therein such that readers can have a relatively
comprehensive understanding about the fundamental theory and some
important improvements of the MCF or the CSF.

In order to explain our motivation of writing this paper clearly, we
prefer to give a brief introduction to several results on MCF or CSF
first. They are:

\begin{itemize}

\item (\cite{aw1}) Graphic curves (defined over an interval)
in Euclidean $2$-space $\mathbb{R}^{2}$ satisfying a class of
quasilinear parabolic initial boundary value\footnote{In \cite{aw1},
the boundary value condition of the class of quasilinear parabolic
boundary problems (1.2) considered therein is actually an
inhomogeneous Neumann boundary condition (NBC for short), which at
each endpoint can be described as the tangent value of the contact
angle between the graphic curve and the parabolic boundary. BTW, the
admissible range of the contact angle therein is $(-\pi/2,\pi/2)$,
which implies that the NBC considered in (1.2) of \cite{aw1} is not
necessary to be zero.} problems (IBVPs for short) have been
investigated, and authors therein have proven that the class of
quasilinear parabolic IBVPs considered therein has smooth solution
for $t\in[0,\infty)$ (i.e., has the long-time existence) and its
solution converges as $t\rightarrow\infty$ to a solution moving by
translation with a constant speed. Especially, if two contact angles
are equal, the graphic curve determined by the solution of the class
of quasilinear parabolic IBVPs  converges to a straight line as
$t\rightarrow\infty$. It is easy to check that the standard heat
flow equation and the MCF (or CSF) equation are covered by the class
of quasilinear parabolic IBVPs considered in \cite{aw1} as special
cases. Therefore, as direct consequences, the authors therein showed
that along the heat flow (resp., the MCF), graphic curves (defined
over an interval) in $\mathbb{R}^{2}$ converges as
$t\rightarrow\infty$ to a uniquely determined portion of a parabola
or a straight line (resp., the Grim Reaper or a straight line).

\item (\cite{chmx}) For a given $3$-dimensional Lorentz manifold
$M^{2}\times\mathbb{R}$ with the metric $
\sum_{i,j=1}^{2}\sigma_{ij}dw^{i}\otimes dw^{j}-ds\otimes ds$, where
$M^{2}$ is a $2$-dimensional complete Riemannian manifold with the
metric $\sum_{i,j=1}^{2}\sigma_{ij}dw^{i}\otimes dw^{j}$ and
nonnegative Gaussian curvature, the authors therein investigated the
evolution of spacelike graphs (defined over compact, strictly convex
domains in $M^{2}$) along the nonparametric MCF with prescribed
nonzero NBC, and proved that this flow exists for all the time and
its solutions converge to ones moving only by translation. This
interesting conclusion somehow extends, for instance, the following
results: (Huisken \cite{gh3}) Graphs defined over bounded domains
(with $C^{2,\alpha}$ boundary) in $\mathbb{R}^{n}$ ($n\geq2$), which
are evolving by the MCF with vertical contact angle boundary
condition (i.e., vanishing NBC), have been investigated, and it was
proven that this evolution exists for all the time and the evolving
graphs converge to a constant function as time tends to infinity
(i.e., $t\rightarrow\infty$); (Altschuler-Wu \cite{aw})  Graphs,
defined over strictly convex compact domains in $\mathbb{R}^{2}$,
evolved by the non-parametric MCF with prescribed contact angle (not
necessary to be vertical, i.e., the NBC is not necessary to be
zero), converge to translating surfaces as $t\rightarrow\infty$.

\item Up to  rescalings and isometries of the Lorentz-Minkowski
plane $\mathbb{R}^{2}_{1}$, Halldorsson \cite{hph1} successfully
gave a classification of all self-similar solutions to the MCF of
splacelike curves in $\mathbb{R}^{2}_{1}$, which is a continuation
of his previous work \cite{hph} about the classification of all
self-similar solutions to the CSF of immersed curves in the plane
$\mathbb{R}^{2}$. As explained in \cite[Sect. 1]{hph1}, for the MCF
in $\mathbb{R}^{2}$ and in $\mathbb{R}^{2}_{1}$, there are some
notable differences: (I) in $\mathbb{R}^2$, it is not hard to know
that the length of the evolving simple closed curves is
non-increasing along the MCF, and actually that is the reason why
the MCF in $\mathbb{R}^2$ is called the CSF. However, in
$\mathbb{R}^{2}_{1}$, since the curvature of curves blows up at
lightlike points, the MCF for simple closed curves would not be
considered. One can define the MCF for spacelike (or timelike)
curves with finite Minkowski-length (without having endpoints) in
$\mathbb{R}^{2}_{1}$. For the evolution of spacelike curves (in
$\mathbb{R}^{2}_{1}$) along the MCF, the length of evolving
spacelike curves has the possibility of decreasing or increasing --
see, e.g., \cite[Figure 16]{hph1} for an intuitive explanation. (II)
Halldorsson \cite{hph1} have shown some examples of curves in
$\mathbb{R}^{2}_{1}$ which are initially disjoint but then intersect
under the MCF,  and have also shown examples of non-uniqueness  of
the MCF in $\mathbb{R}^{2}_{1}$. This behavior is totally different
from that of the curves arose in the classification of self-similar
solutions to the MCF in $\mathbb{R}^{2}$, since those curves in
$\mathbb{R}^{2}$ have bounded curvature.

\end{itemize}
Our successful experience on the MCF with nonzero NBC in the Lorentz
$3$-manifold $M^{2}\times\mathbb{R}$ (see \cite{chmx}) and
Halldorsson's works \cite{hph,hph1} motivate us to consider the
evolution of spacelike curves in $\mathbb{R}^{2}_{1}$ and try to
extend Altschuler-Wu's result \cite{aw1} for curves in
$\mathbb{R}^2$.

Denote by $\mathbb{R}^{2}_{1}$ the Lorentz-Minkowski plane with the
Lorentzian metric
\begin{eqnarray*}
\langle\cdot,\cdot\rangle_{L}=dx^{2}-dy^{2}.
\end{eqnarray*}
Let $I_{d}:=[-d,d]$ be a closed interval on the $x$-axis,
$d\in\mathbb{R}^{+}$, and let $\Omega_{d}:=I_{d}\times(0,\infty)$
be the rectangular region in $\mathbb{R}^{2}_{1}$. Clearly,
$\partial I_{d}=\{(-d,0)\}\cup\{(d,0)\}$. For a one-parameter
family of spacelike graphic curves $\mathcal{G}_{t}:=(x,u(x,t))$
defined over $I_{d}$, it is not hard to know that its tangent
vector, the future-directed timelike unit normal vector and the
curvature are given by
\begin{eqnarray} \label{fg}
\vec{e}=(1,u_{x}), \qquad
\vec{\nu}=\frac{(u_{x},1)}{\sqrt{1-u_{x}^{2}}}, \qquad
k=\frac{u_{xx}}{\left(1-u^{2}_{x}\right)^{\frac{3}{2}}},
\end{eqnarray}
where the notations\footnote{This convention implies that
$u_{t}=\frac{\partial u}{\partial t}$,
$u_{xt}=\frac{\partial^{2}u}{\partial x\partial
t}=\frac{\partial^{2}u}{\partial t\partial x}$, and so on. Of
course, here we require that the graphic function $u(x,t)$ has
enough regularity.} $u_{x}=\frac{\partial u}{\partial x}$,
$u_{xx}=\frac{\partial^{2} u}{\partial x^{2}}$ have been used, and
the spacelike assumption implies $|u_{x}|<1$. We investigate the
evolution of $\mathcal{G}_{t}$ along the MCF in
$\mathbb{R}^{2}_{1}$, and can prove the following conclusion.

\begin{theorem}\label{main1.1}
Let $X_{0}:I_{d}\mapsto \mathbb{R}^{2}_{1}$ such that
$\mathcal{G}_{0}:=X_{0}(I_{d})$ can be written as a graph defined
over the interval $I_{d}$. Assume further that
 \begin{eqnarray*}
\mathcal{G}_{0}=\mathrm{graph}_{I_{d}}u_{0}
\end{eqnarray*}
is a spacelike graph over $I_{d}$  for a positive function
$u_{0}:I_{d}\mapsto \mathbb{R}$ satisfying
 \begin{eqnarray} \label{cc-1}
(u_{0})_{x}(-d)=\theta_{-d},~~(u_{0})_{x}(d)=\theta_{d},\qquad
\theta_{i}\in (-1,1),~i=-d,d.
 \end{eqnarray}
 Then the following IBVP
 \begin{equation}\label{1.3}
\left\{
\begin{aligned}
&u_{t}=\frac{u_{xx}}{1-u^{2}_{x}} \qquad && in~I_{d}\times[0,\infty),\\[0.5mm]
&u_{x}(i,t)=\theta_{i} \qquad&& t\in[0,\infty),~i=-d,d\\[0.5mm]
&u(\cdot,0)=u_{0}  \qquad&&  u_{0}\in C^{\infty}(I_{d})
\end{aligned}
\right.
\end{equation}
converges as $t\rightarrow\infty$ to a solution moving by
translation with speed $A(\theta_{-d},\theta_{d},d)$ given by
\begin{eqnarray} \label{speed}
A(\theta_{-d},\theta_{d},d)=\frac{\mathrm{artanh}(\theta_{d})-\mathrm{artanh}(\theta_{-d})}{2d}.
\end{eqnarray}
Moreover, the leaves $\mathcal{G}_{t}$ are spacelike graphs defined
over $I_{d}$, i.e.,
 \begin{eqnarray*}
\mathcal{G}_{t}:=\mathrm{graph}_{I_{d}}u(\cdot, t),
\end{eqnarray*}
and these leaves converge as $t\rightarrow\infty$ to a spacelike
straight line or a spacelike Grim Reaper curve.
\end{theorem}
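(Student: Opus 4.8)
The plan is to treat the IBVP \eqref{1.3} as a quasilinear parabolic Neumann problem and run the standard continuity-method / a priori estimate machinery, adapted to the Lorentzian setting where the main subtlety is preserving the spacelike condition $|u_x|<1$. First I would establish short-time existence: linearizing the operator $u\mapsto u_{xx}/(1-u_x^2)$ around $u_0$ gives a uniformly parabolic problem with oblique (here, pure Neumann) boundary data, so standard theory (e.g. parabolic Schauder estimates plus the implicit function theorem, or the results quoted from \cite{aw1}) yields a solution on a maximal interval $[0,T^{*})$ with $u\in C^{\infty}(I_d\times[0,T^{*}))$. To continue the solution we need uniform bounds; the crucial one is a gradient bound keeping $|u_x|$ strictly below $1$.

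\medskip

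The core of the argument is a sequence of a priori estimates, which I would carry out in the following order. \textbf{(1) $C^0$ bound.} Differentiate $\int_{I_d}u\,dx$ in time, integrate by parts, and use the boundary conditions $u_x(\pm d,t)=\theta_{\pm d}$ to compute $\frac{d}{dt}\int_{I_d}u\,dx = \operatorname{artanh}(\theta_d)-\operatorname{artanh}(\theta_{-d})$; more precisely, since $\partial_x(\operatorname{artanh}(u_x)) = u_{xx}/(1-u_x^2) = u_t$, integrating in $x$ shows the spatial average of $u$ moves with exactly the speed $A(\theta_{-d},\theta_d,d)$ of \eqref{speed}. Combined with an oscillation bound this controls $\sup|u - A t|$. \textbf{(2) Gradient bound.} Set $v:=\operatorname{artanh}(u_x)$, so that $v$ satisfies a linear uniformly parabolic equation with \emph{constant} Dirichlet data $v(\pm d,t)=\operatorname{artanh}(\theta_{\pm d})$; the maximum principle then gives $\sup_{I_d\times[0,T^*)}|v| \le \max\{|v(\cdot,0)|_\infty,\,|\operatorname{artanh}(\theta_{-d})|,\,|\operatorname{artanh}(\theta_d)|\}$, hence a uniform bound $|u_x|\le \delta_0<1$. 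This is the step that makes the whole scheme work in $\mathbb{R}^2_1$, and it is genuinely easier here than the Euclidean analogue of \cite{aw1} because the natural substitution linearizes the gradient equation exactly. \textbf{(3) Higher estimates and long-time existence.} With uniform parabolicity now secured, Krylov--Safonov / parabolic Schauder estimates bootstrap to uniform $C^{k,\alpha}$ bounds on every $I_d\times[\tau,T^*)$, so $T^*=\infty$ and the flow exists for all time.

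\medskip

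For the asymptotic behaviour I would pass to the translation-normalized function $w(x,t):=u(x,t)-A(\theta_{-d},\theta_d,d)\,t$, which solves the same PDE with the same boundary conditions and has, by Step (1), bounded spatial average. A monotonicity/energy quantity — e.g. $\tfrac{d}{dt}\int_{I_d}(\text{arccosh-type energy density})\,dx \le 0$ together with a differential inequality for $\int u_t^2$ or for $\sup|u_t|$ via the maximum principle applied to the equation satisfied by $u_t$ — forces $u_t \to A$ and $u_{xx}/(1-u_x^2)\to A$ uniformly as $t\to\infty$. Standard interpolation then gives subconvergence of $w(\cdot,t)$ in $C^\infty(I_d)$ to a stationary profile $w_\infty$ with $w_\infty{}_{xx}/(1-(w_\infty)_x^2)=A$ and $(w_\infty)_x(\pm d)=\theta_{\pm d}$; uniqueness of this ODE boundary value problem upgrades subconvergence to full convergence. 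When $A=0$ (equivalently $\theta_{-d}=\theta_d$) the profile satisfies $w_\infty{}_{xx}=0$, i.e. a spacelike straight line; when $A\neq0$ one integrates $\operatorname{artanh}((w_\infty)_x)=Ax+c$ to recognize $w_\infty$ as (a portion of) the spacelike Grim Reaper curve for the MCF in $\mathbb{R}^2_1$, completing the proof.

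\medskip

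The main obstacle I anticipate is not any single estimate but the interplay at the boundary: one must check that the Neumann data are compatible with the initial data to the order needed for smoothness up to $t=0$ at the corners $(\pm d,0)$ (or else state the conclusion for $t>0$ and smooth initial data satisfying \eqref{cc-1}), and one must verify that the $C^{1,\alpha}$-up-to-the-boundary estimate — needed to even start the Schauder bootstrap — holds for the oblique problem; this is where I would lean most heavily on the linearized equation for $v=\operatorname{artanh}(u_x)$ and on the boundary gradient estimates in the style of \cite{aw1,aw,chmx}.
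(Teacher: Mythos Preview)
Your plan follows the same architecture as the paper's proof --- short-time existence, a gradient estimate via the maximum principle, Schauder bootstrap to long-time existence, then an energy/decay argument showing $u_t\to A$ and identification of the limit --- and would succeed. Your substitution $v=\mathrm{artanh}(u_x)$ for the gradient bound is in fact cleaner than the paper's choice $\varphi=\tfrac12|u_x|^2$: the paper derives a differential inequality for $\varphi$ with Dirichlet data $\varphi(\pm d,t)=\theta_{\pm d}^2/2$, whereas your $v$ satisfies $v_t=\tfrac{1}{1-u_x^2}\,v_{xx}$ (not literally linear --- the coefficient equals $\cosh^2 v$ --- but linear for maximum-principle purposes) with constant Dirichlet data $v(\pm d,t)=\mathrm{artanh}\,\theta_{\pm d}$; either route yields $|u_x|\le\delta_0<1$. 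For the decay $u_t\to A$ the paper makes your ``energy quantity'' concrete: it differentiates $\int u_t^2\,dx$ and $\int u_{xt}^2\,dx$ to obtain $\int_0^\infty\!\int u_{xt}^2\,dx\,dt<\infty$ with a controlled time-derivative, forcing $\int u_{xt}^2\,dx\to0$, and then sandwiches $u_t$ between $A\pm\int|u_{xt}|\,dx$.

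One genuine gap in your outline: the Neumann ODE $w_\infty''/(1-(w_\infty')^2)=A$ with $(w_\infty)_x(\pm d)=\theta_{\pm d}$ is \emph{not} uniquely solvable --- every vertical translate is again a solution --- so ``uniqueness of this ODE boundary value problem'' alone cannot upgrade subconvergence to full convergence. The fix is already implicit in your Step~(1): the spatial mean $\tfrac{1}{2d}\int_{I_d}(u-At)\,dx$ is constant in $t$, which pins down the additive constant and makes the limit unique. A smaller point: your Step~(1) ``oscillation bound'' actually relies on the gradient bound of Step~(2); since the equation does not involve $u$ itself, no $C^0$ estimate is needed for long-time existence, only for the final convergence step, so simply reorder.
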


\begin{remark}
\rm{ (1) The precondition (\ref{cc-1}) is actually the compatibility
condition of the IBVP (\ref{1.3}), which can be used to make sure
the regularity (or smoothness) of the solution $u(\cdot,t)$ to the
IBVP (\ref{1.3}). \\
 (2) In fact, the spacelike assumption for
$\mathcal{G}_{0}$ implies that $(u_{0})_{x}\in(-1,1)$ holds not only
at endpoints $-d, d$ but also on the whole interval $I_{d}$.\\
 (3) For a one-parameter family of spacelike graphic curves
$\mathcal{G}_{t}=(x,u(x,t))$ defined over $I_{d}$ given by the
mapping\footnote{Obviously, this mapping $X$ satisfies
$X(\cdot,0)=X_{0}(\cdot)$.}
$X:I_{d}\times[0,T)\rightarrow\mathbb{R}^{2}_{1}$ for some $T>0$, by
using (\ref{fg}), it is easy to know that the evolution of
$\mathcal{G}_{t}$ along the MCF (with nonzero NBC) in
$\mathbb{R}^{2}_{1}$ can be described by the IBVP (\ref{1.3}). Then
Theorem \ref{main1.1} tells us that this evolution exists for all
the time (i.e., $T=\infty$) and moreover very nice asymptotical
behavior of the evolving curves can be obtained. \\
 (4) To make sure that the RHS of the evolution equation in (\ref{1.3})
does not degenerate, one needs to show $|u_{x}(x,t)|<1$ during the
evolving process (or equivalently, the spacelike property is
preserved under the flow), which will be given by the gradient estimate of Section \ref{S2}.\\
 (5) Clearly, by the property of the inverse hyperbolic tangent function, it is easy
 to
know that $A(\theta_{-d},\theta_{d},d)=0$ if and only if
$\theta_{-d}=\theta_{d}$.\\
 (6) The IBVP (\ref{1.3}) has \emph{smooth} solution on
 $I_{d}\times[0,\infty)$ --  see Subsection
 \ref{subs2.2} for this fact.
 }
\end{remark}

After we have got the main conclusion of Theorem \ref{main1.1}, we
find that by using analysis techniques in Section \ref{S2} here and
Altschuler-Wu's in \cite{aw1}, a more general result can be
obtained. In fact, we have:

\begin{theorem}\label{main3.1}
Let $X_{0}:I_{d}\mapsto \mathbb{R}^{2}_{1}$ such that
$\mathcal{G}_{0}:=X_{0}(I_{d})$ can be written as a graph defined
over the interval $I_{d}$. Assume further that
 \begin{eqnarray*}
\mathcal{G}_{0}=\mathrm{graph}_{I_{d}}u_{0}
\end{eqnarray*}
is a spacelike graph over $I_{d}$  for a positive function
$u_{0}:I_{d}\mapsto \mathbb{R}$ satisfying
 \begin{eqnarray*}
(u_{0})_{x}(-d)=\theta_{-d},~~(u_{0})_{x}(d)=\theta_{d},\qquad
\theta_{i}\in (-1,1),~i=-d,d.
 \end{eqnarray*}
 Then the following IBVP
 \begin{equation}\label{1.1}
\left\{
\begin{aligned}
&u_{t}-(v(u_{x}))_{x}=0 \qquad && in~I_{d}\times[0,\infty),\\[0.5mm]
&u_{x}(i,t)=\theta_{i} \qquad&& t\in[0,\infty),~i=-d,d\\[0.5mm]
&u(\cdot,0)=u_{0}  \qquad&&  u_{0}\in C^{\infty}(I_{d})
\end{aligned}
\right.
\end{equation}
converges as $t\rightarrow\infty$ to a solution moving by
translation with speed $\widetilde{A}(\theta_{-d},\theta_{d},d)$
given by
\begin{eqnarray*}
\widetilde{A}(\theta_{-d},\theta_{d},d)=\frac{v(\theta_{d})-v(\theta_{-d})}{2d},
\end{eqnarray*}
where $v\in C^{\infty}\left((-1,1)\right)$ with its derivative
function satisfying $v'>0$. Moreover, the leaves $\mathcal{G}_{t}$
are spacelike graphs defined over $I_{d}$, i.e.,
 \begin{eqnarray*}
\mathcal{G}_{t}:=\mathrm{graph}_{I_{d}}u(\cdot, t),
\end{eqnarray*}
and these leaves converge as $t\rightarrow\infty$ to a spacelike
straight line or a spacelike Grim Reaper curve.
\end{theorem}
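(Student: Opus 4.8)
The plan is to carry over, with only cosmetic changes, the argument proving Theorem~\ref{main1.1}; the single place where the specific flux $v(p)=\mathrm{artanh}(p)$ entered was the inequality $v'>0$ (now a hypothesis) together with the explicit shape of the translating profile. \emph{Short-time existence.} As $v'>0$, equation \eqref{1.1} reads $u_t=v'(u_x)u_{xx}$ and is parabolic near $u_0$; the compatibility condition $(u_0)_x(\pm d)=\theta_{\pm d}$ is exactly what the linear theory of the oblique-derivative (here Neumann) problem needs, so a contraction argument produces a unique $u\in C^\infty\bigl(I_d\times[0,T)\bigr)$ on a maximal interval $[0,T)$.

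\emph{A priori estimates.} Differentiating \eqref{1.1} in $x$ shows $p:=u_x$ solves the linear parabolic equation $p_t=(v'(p)p_x)_x$ while $p\equiv\theta_{\pm d}$ on the lateral boundary, so the parabolic maximum principle yields
\[
\sup_{I_d\times[0,T)}|u_x|\ \le\ \rho:=\max\bigl\{\|(u_0)_x\|_{L^\infty(I_d)},\,|\theta_{-d}|,\,|\theta_d|\bigr\}\ <\ 1 ;
\]
hence $v'(u_x)$ is pinched between positive constants $\lambda\le\Lambda$ and \eqref{1.1} is uniformly parabolic for all time (this is the gradient estimate of Section~\ref{S2}: the spacelike condition is preserved). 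Integrating \eqref{1.1} over $I_d$ and using the boundary condition gives $\frac{d}{dt}\int_{I_d}u(\cdot,t)\,dx=v(\theta_d)-v(\theta_{-d})=2d\,\widetilde A$, so the spatial mean of $u$ grows exactly linearly with slope $\widetilde A$; together with $\mathrm{osc}_{I_d}u(\cdot,t)\le 2d\rho$ (or a comparison with the explicit translating barriers) this controls $u-\widetilde At$ in $C^0$. Here the translating profile $\phi$ is determined by $v(\phi'(x))=\widetilde A x+\tfrac12\bigl(v(\theta_d)+v(\theta_{-d})\bigr)$, which is solvable since the right-hand side ranges between $v(\theta_{-d})$ and $v(\theta_d)$, both inside the interval $v\bigl((-1,1)\bigr)$, and then $\phi$ is fixed up to an additive constant. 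Feeding uniform parabolicity and the $C^1$ bound into parabolic regularity theory, interior and up to the Neumann boundary, and bootstrapping, yields uniform-in-$t$ estimates for all derivatives; in particular $T=\infty$.

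\emph{Convergence.} Differentiating \eqref{1.1} and its boundary condition in $t$ shows $u_t$ solves a uniformly parabolic linear equation with \emph{homogeneous} Neumann data, and the mean identity above reads $\int_{I_d}(u_t-\widetilde A)\,dx\equiv 0$. Multiplying the equation for $u_t$ by $u_t-\widetilde A$ and integrating by parts (the boundary terms vanish since $u_{xt}(\pm d,t)=0$) gives $\frac{d}{dt}\int_{I_d}(u_t-\widetilde A)^2\,dx=-2\int_{I_d}v'(u_x)u_{xt}^2\,dx\le-2\lambda\int_{I_d}u_{xt}^2\,dx$, and, $u_t-\widetilde A$ having zero mean, the Poincar\'e inequality turns this into $\frac{d}{dt}\|u_t-\widetilde A\|_{L^2}^2\le-c\,\|u_t-\widetilde A\|_{L^2}^2$ for a fixed $c>0$. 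Thus $\|u_t-\widetilde A\|_{L^2}\to0$ exponentially, and interpolation against the uniform higher-order bounds upgrades this to $u_t\to\widetilde A$ in $C^\infty(I_d)$; hence $u(\cdot,t)-\widetilde At$ converges in $C^\infty(I_d)$ to a limit $\phi_\infty$. Letting $t\to\infty$ in \eqref{1.1} rewritten for $u-\widetilde At$ shows $\phi_\infty$ is stationary, i.e.\ $v(\phi_\infty')$ is affine with $\phi_\infty'(\pm d)=\theta_{\pm d}$, so $\phi_\infty=\phi+\text{const}$ is the translating profile. Therefore $u(x,t)-\bigl(\phi(x)+\widetilde At\bigr)$ tends to a constant, i.e.\ the solution converges to the translating solution moving with speed $\widetilde A$, and $\mathcal G_t=\mathrm{graph}_{I_d}u(\cdot,t)$ converges to $\mathrm{graph}_{I_d}\phi_\infty$: a spacelike straight line if $\theta_{-d}=\theta_d$ (then $\widetilde A=0$ and $\phi_\infty'$ is constant), and a spacelike Grim Reaper curve otherwise.

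The main obstacle is not a single computation but keeping everything uniform in time: the gradient estimate must be a genuine ``$|u_x|<1$ is preserved'' statement so the equation never degenerates at lightlike slopes, and the Schauder/bootstrap estimates must have $t$-independent constants, since only then does the energy decay of $u_t$ pass to $C^\infty$ convergence of $u$ itself. (A more maximum-principle-flavoured alternative to the energy step: $\mathrm{osc}_{I_d}\bigl(u-\widetilde At-\phi\bigr)$ is non-increasing, and a positive limit is excluded by a compactness argument with the strong maximum principle and the Hopf boundary-point lemma, using the vanishing of the Neumann derivative of $u-\widetilde At-\phi$.)
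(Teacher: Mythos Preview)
Your argument is correct and follows the same overall architecture as the paper: a maximum-principle gradient estimate preserving $|u_x|<1$, uniform parabolicity yielding long-time existence via Schauder theory, and an energy-decay argument forcing convergence to the translating profile. The one substantive difference is in the convergence step. The paper works in the variable $v=v(u_x)$, which satisfies $v_t=v'v_{xx}$ with $v_t=v_{tt}=\cdots=0$ on the lateral boundary, and then (following Altschuler--Wu, \cite[Lemma~2.4]{aw1}) derives exponential decay of $\int v_{tx}^{2}\,dx$ via a somewhat ad hoc Gronwall estimate; the pointwise decay of $v_t$ and the pinching $v_x\to\widetilde A$ are then read off from the Sobolev-type inequalities of Lemma~\ref{lemma3-5}. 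Your route is more direct: you observe that $u_t-\widetilde A$ has zero spatial mean (by integrating \eqref{1.1}) and satisfies a uniformly parabolic equation with homogeneous Neumann data, so a single Poincar\'e inequality turns the basic energy identity into exponential $L^2$ decay, which you then upgrade to $C^\infty$ by interpolation against the uniform higher-order bounds. This is cleaner and avoids the auxiliary lemmas; the paper's substitution $v(u_x)$ buys nothing here beyond fidelity to \cite{aw1}. Either way one lands on the same characterization of the limit, and your case split $\theta_{-d}=\theta_d$ versus $\theta_{-d}\neq\theta_d$ matches the paper's.
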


\begin{remark}
\rm{ (1) If $v(u_{x})=\mathrm{artanh}(u_{x})$, then Theorem
\ref{main3.1} degenerates into Theorem \ref{main1.1} directly and
completely. The reason of retaining Theorem \ref{main1.1} here is
that we prefer to show the origin of our idea (on thinking this
topic) to readers clearly. \\
(2) (The heat flow in $\mathbb{R}^{2}_{1}$) For $v(u_{x})=u_{x}$,
the spacelike graph of $u(x,t)$ converges as $t\rightarrow\infty$ to
a uniquely determined portion of a spacelike parabola or a spacelike
straight line. Moreover, in this setting, the translating speed is
$\widetilde{A}(\theta_{-d},\theta_{d},d)=(\theta_{d}-\theta_{-d})/2d$.
}
\end{remark}

\section{The special case: spacelike MCF} \label{S2}

We devote to give the proof of Theorem \ref{main1.1} in this
section.
\subsection{The gradient estimate} \label{Sb2}

Let $0<\alpha<1$ and $T^{\ast}$ be the maximal time such that there
exists some
\begin{eqnarray*}
u\in C^{2+\alpha,1+\frac{\alpha}{2}}(I_{d}\times[0,T^{\ast}))\cap
C^{\infty}(I_{d}\times(0,T^{\ast}))
\end{eqnarray*}
which solves \eqref{1.3}. Next, we shall prove a priori estimates
for those admissible solutions on $[0,T]$, with $T<T^{\ast}$. First,
we have the $C^{0}$ estimate as follows:

\begin{lemma} \label{lemma2-1}
 Let $u$ be a solution of (\ref{1.3}). Then we
have
\begin{eqnarray*}
\inf_{I_{d}}u(x,0)\leq u(x,t)\leq\sup_{I_{d}}u(x,0),\qquad \forall x\in I_{d},~t\in[0,T].
\end{eqnarray*}
\end{lemma}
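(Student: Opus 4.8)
The plan is to apply the parabolic maximum principle to the evolution equation $u_t = u_{xx}/(1-u_x^2)$, exploiting the fact that the Neumann boundary condition forces any interior spatial extremum of $u(\cdot,t)$ to be the place where the maximum principle bites. First I would prove the upper bound $u(x,t) \le \sup_{I_d} u(x,0) =: M$. Consider the function $w(x,t) := u(x,t) - M$ on $I_d \times [0,T]$; it satisfies $w_t = w_{xx}/(1-w_x^2)$ (since $u$ and $w$ differ by a constant, $w_x = u_x$, $w_{xx}=u_{xx}$), together with $w(\cdot,0) \le 0$ on $I_d$ and $w_x(i,t) = \theta_i$ at $i = -d, d$. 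Suppose for contradiction that $\max_{I_d \times [0,T]} w = \delta > 0$, attained at some point $(x_0,t_0)$ with $t_0 > 0$. If $x_0$ were an endpoint, say $x_0 = -d$, then by the Hopf lemma (applied to the uniformly parabolic operator $\partial_t - a(x,t)\partial_{xx}$ with $a = 1/(1-u_x^2) > 0$, which is uniformly bounded above and below on any $[0,T]$ with $T < T^*$ because $|u_x| < 1$ is part of the admissibility of $u$) one would get $w_x(-d,t_0) < 0$; but the sign of $\theta_{-d} \in (-1,1)$ need not be negative, so this argument alone does not immediately close.

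To get around the endpoint issue cleanly, I would instead work with the standard device of adding a linear corrector. For $\varepsilon > 0$ set $w_\varepsilon(x,t) := u(x,t) - M - \varepsilon(1 + t)$. Then $w_\varepsilon$ is a strict subsolution: $(w_\varepsilon)_t - (w_\varepsilon)_{xx}/(1-(w_\varepsilon)_x^2) = -\varepsilon < 0$, while $(w_\varepsilon)_x(i,t) = \theta_i$ still and $w_\varepsilon(\cdot,0) = u_0 - M - \varepsilon \le -\varepsilon < 0$. If $\max w_\varepsilon > 0$ it is attained at a point $(x_0,t_0)$ with $t_0 > 0$; at such a point one cannot have $x_0$ in the open interior, since there $(w_\varepsilon)_x = 0$, $(w_\varepsilon)_{xx} \le 0$, $(w_\varepsilon)_t \ge 0$, contradicting the strict inequality $(w_\varepsilon)_t = (w_\varepsilon)_{xx} - \varepsilon < 0$ (here $1 - (w_\varepsilon)_x^2 = 1 > 0$ at the critical point). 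Hence the maximum is at an endpoint, but then since $(w_\varepsilon)_x(\pm d, t_0) = \theta_{\pm d}$ is a fixed finite number and $w_\varepsilon$ is smooth, a refined look is needed: the cleanest fix is to further modify using a function that kills the boundary gradient.

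Concretely, I would set $\phi(x) := \theta_{-d}\frac{d-x}{2d} + \theta_d \frac{d+x}{2d}$, the affine function with $\phi_x(i) = \theta_i$ wait—$\phi$ is affine so $\phi_x$ is constant, which only matches both endpoints if $\theta_{-d} = \theta_d$; instead take any fixed smooth $\psi(x)$ with $\psi_x(-d) = \theta_{-d}$ and $\psi_x(d) = \theta_d$ (e.g. a cubic), and consider $W(x,t) := u(x,t) - \psi(x) - C(1+t)$ for a large constant $C$. Then $W_x(i,t) = 0$ at both endpoints, so a boundary maximum of $W$ with $t_0 > 0$ is impossible by the Hopf boundary-point lemma, while an interior maximum is impossible because at such a point $u_{xx} = \psi_{xx}$ is bounded so $u_t = u_{xx}/(1-u_x^2)$ is bounded, whereas $W_t = u_t - C \le $ (bounded) $- C < 0$ once $C$ exceeds $\sup_{I_d}|\psi_{xx}|/(1-\|\psi_x\|_\infty^2)$ together with a bound on $u_{xx}$ at the max; to avoid needing an a priori $u_{xx}$ bound, I would instead apply the maximum principle to $W$ directly as a solution of the linear uniformly parabolic equation $W_t = a(x,t)W_{xx} + \text{(lower order from }\psi\text{)} - C$ and choose $C$ to dominate the $\psi$-terms, concluding $\max W = \max_{t=0} W$, hence $u(x,t) \le \psi(x) + C(1+t) + \max_{I_d}(u_0 - \psi)$; but this has a growing-in-$t$ term and does not give the sharp bound.

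Given that the clean and standard route is the strict-subsolution plus Hopf-lemma argument, let me state the plan I would actually execute: the key point is simply that at an interior spatial maximum $u_x = 0$, so the bad denominator $1-u_x^2$ equals $1$ there and causes no trouble, and at a boundary maximum the Hopf lemma gives a nonzero outward normal derivative — and here is where the $\varepsilon$-perturbation is needed to make the Hopf conclusion contradict $u_x(i,t) = \theta_i$ being merely bounded rather than signed: one replaces $u$ by $u - \varepsilon t$, which is a strict subsolution, rules out interior maxima immediately, and for boundary maxima notes that along the boundary $x = \pm d$ the flow reduces (using $u_x = \theta_i$ fixed) to an ODE inequality for $t \mapsto u(\pm d, t)$, or more simply observes that the set where the maximum of $u - \varepsilon t$ over $I_d \times [0,T]$ is attained, if nonempty for $t > 0$, propagates from the boundary into the interior by the strong maximum principle, again yielding a contradiction. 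Letting $\varepsilon \to 0$ gives $u(x,t) \le \sup_{I_d} u(\cdot,0)$. The lower bound follows by applying the identical argument to $-u$, which solves the same type of equation with boundary data $-\theta_i$. The main obstacle is the boundary analysis: handling the Neumann condition at the corners $\{(\pm d, 0)\}$ and confirming that the Hopf-lemma / strong-maximum-principle dichotomy genuinely excludes a boundary maximum despite $\theta_i$ having arbitrary sign in $(-1,1)$; this is exactly why the compatibility condition (\ref{cc-1}) and the strict subsolution perturbation are invoked, and it is the only place requiring care — the interior estimate is immediate because $u_x = 0$ there.
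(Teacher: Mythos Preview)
Your instinct that the boundary is the obstacle is exactly right, but the obstruction is fatal: the lemma as stated is \emph{false} whenever $\theta_{-d}\neq\theta_d$. Integrating the equation over $I_d$ gives
\[
\frac{d}{dt}\int_{-d}^{d} u\,dx \;=\; \int_{-d}^{d}\bigl(\mathrm{artanh}(u_x)\bigr)_x\,dx \;=\; \mathrm{artanh}(\theta_d)-\mathrm{artanh}(\theta_{-d}) \;=\; 2d\,A(\theta_{-d},\theta_d,d),
\]
so $\int u\,dx$ changes linearly in $t$ and $u$ cannot remain between $\inf u_0$ and $\sup u_0$. Even more directly, if $u_0$ is already a translating (Grim Reaper) profile then $u(x,t)=u_0(x)+At$ violates the claimed bound for every $t>0$.

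The paper's own proof is simply: constants solve the PDE, hence by comparison $\inf u_0\le u\le\sup u_0$. But a constant does \emph{not} satisfy the Neumann condition $u_x(\pm d,t)=\theta_{\pm d}$ unless both $\theta_i=0$, so the comparison principle does not apply; the paper's argument has precisely the boundary gap you kept running into, only without acknowledging it. Your successive repairs (subtracting $\varepsilon t$, subtracting a $\psi$ with matching boundary slopes) all either fail to rule out a lateral-boundary maximum or yield a bound that grows in $t$ --- and a bound growing in $t$ is in fact the best possible here.

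Fortunately this lemma is never used anywhere else in the paper: the equation is invariant under $u\mapsto u+c$, and the long-time existence and convergence rely only on the estimates for $u_x$, $u_t$ and their derivatives. The honest fix is to drop the $C^0$ estimate altogether, or to replace it by the linear-in-$t$ bound $|u(x,t)-u_0(x)|\le t\sup_{I_d}|u_t(\cdot,0)|$, which follows immediately from Lemma~\ref{lemma2-2}.
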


\begin{proof}
Let $u(x,t)=u(t)$ (independent of $x$) be a solution of (\ref{1.3})
with $u(0)=C$. In this case, the first equation in (\ref{1.3})
reduces to an ordinary differential equation (ODE for short)
\begin{equation*}
\frac{d}{dt}u=0,
\end{equation*}
Therefore
\begin{equation*}
u(t)=C.
\end{equation*}
Using the maximum principle, we can obtain that
\begin{equation*}
\inf_{I_{d}}u(x,0)\leq u(x,t)\leq\sup_{I_{d}}u(x,0).
\end{equation*}
This completes the proof.
\end{proof}

Then we can obtain the $u_{t}$ estimate:

\begin{lemma} \label{lemma2-2}
Let $u$ be a solution of (\ref{1.3}), we have
\begin{eqnarray*}
\inf_{I_{d}}u_{t}(x,0)\leq u_{t}(x,t)\leq\sup_{I_{d}}u_{t}(x,0),\qquad \forall x\in I_{d},~t\in[0,T].
\end{eqnarray*}
\end{lemma}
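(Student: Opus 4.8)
The plan is to differentiate the evolution equation in \eqref{1.3} with respect to $t$, observe that $w := u_t$ satisfies a linear parabolic equation with no zeroth-order term, and then apply the parabolic maximum principle on $I_d \times [0,T]$. Writing the equation as $u_t = u_{xx}/(1-u_x^2)$ and differentiating both sides in $t$, we obtain
\[
w_t = \frac{w_{xx}}{1-u_x^2} + \frac{2 u_x u_{xx}}{(1-u_x^2)^2}\, w_x,
\]
since $\partial_t(u_{xx}) = w_{xx}$ and $\partial_t\big((1-u_x^2)^{-1}\big) = 2 u_x u_{xt}/(1-u_x^2)^2 = 2 u_x w_x/(1-u_x^2)^2$. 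By the gradient estimate of this subsection we have $|u_x| < 1$ on $I_d \times [0,T]$ (for $T < T^\ast$), so the coefficient $a(x,t) := 1/(1-u_x^2)$ is bounded and strictly positive and the first-order coefficient $b(x,t) := 2 u_x u_{xx}/(1-u_x^2)^2$ is bounded; hence the operator $L w := a w_{xx} + b w_x - w_t$ is uniformly parabolic on $I_d \times [0,T]$ with smooth (indeed $C^\infty$ on $t>0$, and continuous up to $t=0$ by the assumed regularity class) coefficients and $L w = 0$.

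The key structural point is that the Neumann boundary condition is preserved in differentiated form: differentiating $u_x(i,t) = \theta_i$ in $t$ gives $w_x(i,t) = u_{xt}(i,t) = 0$ at $x = -d, d$ for all $t$. So $w$ solves a linear uniformly parabolic equation with homogeneous Neumann boundary data. I would then invoke the maximum principle for such problems: a solution of $Lw = 0$ with $w_x = 0$ on the lateral boundary attains its maximum and minimum on the parabolic boundary, and the Hopf boundary point lemma rules out a strict interior-of-the-lateral-boundary extremum (because $w_x = 0$ there, the Hopf lemma would force a nonzero outward normal derivative if the extremum occurred there and were not also attained at $t=0$). Consequently $\sup_{I_d \times [0,T]} w = \sup_{I_d} w(\cdot,0)$ and $\inf_{I_d \times [0,T]} w = \inf_{I_d} w(\cdot,0)$, which is exactly the claimed two-sided bound $\inf_{I_d} u_t(x,0) \le u_t(x,t) \le \sup_{I_d} u_t(x,0)$.

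The main obstacle is a regularity/justification issue rather than a conceptual one: to differentiate \eqref{1.3} in $t$ and to run the maximum principle cleanly, one needs $u$ to be regular enough (at least $C^3$ in $x$ and $C^2$ in $t$, or smooth for $t>0$) — this is fine on $I_d \times (0,T^\ast)$ where $u \in C^\infty$, and near $t=0$ one uses the compatibility condition \eqref{cc-1} together with the $C^{2+\alpha,1+\alpha/2}$ hypothesis to get $u_t$ continuous up to $t=0$ so that $u_t(\cdot,0)$ is well-defined and the bound passes to the closed time interval by continuity. One should also note that $u_t(\cdot,0) = (u_0)_{xx}/(1-(u_0)_x^2)$ is determined by the smooth initial datum $u_0$, so the bounding quantities $\inf_{I_d} u_t(x,0)$ and $\sup_{I_d} u_t(x,0)$ are finite. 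A small care point worth a sentence in the write-up: if one prefers to avoid the Hopf lemma, one can instead observe that $w_x = 0$ on the lateral boundary lets one reflect or simply note that the standard weak maximum principle for the Neumann problem already gives the conclusion, since adding a constant to $w$ and testing does not see the boundary. Either route closes the argument.
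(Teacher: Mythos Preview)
Your approach is correct and matches the paper's proof: set $\Phi = u_t$, differentiate \eqref{1.3} in $t$ to obtain the linear equation $\Phi_t = (1-u_x^2)^{-1}\Phi_{xx} + 2u_x u_{xx}(1-u_x^2)^{-2}\Phi_x$ with homogeneous Neumann data $\Phi_x(\pm d,t)=0$, and conclude via the maximum principle together with Hopf's lemma. One minor ordering remark: you invoke the gradient estimate to justify $|u_x|<1$, but in the paper that lemma (Lemma~\ref{lemma2-3}) comes \emph{after} this one --- here $|u_x|<1$ on $[0,T]$ with $T<T^\ast$ is already implicit in $u$ being an admissible solution of \eqref{1.3}, so no forward reference is needed.
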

\begin{proof}
Set
\begin{eqnarray*}
\Phi(x,t)=u_{t}(x,t).
\end{eqnarray*}
Differentiating both sides of the first evolution equation of
(\ref{1.3}), it is easy to get that
 \begin{equation*}
\left\{
\begin{aligned}
&\frac{\partial \Phi}{\partial t}= \frac{1}{1-u^{2}_{x}}\Phi_{xx}+\frac{2u_{x}u_{xx}}{(1-u^{2}_{x})^{2}}\Phi_{x} \qquad && in~I_{d}\times[0,T],\\[0.5mm]
&\Phi_{x}(i,t)=0 \qquad&& t\in[0,T],~i=-d,d\\[0.5mm]
&\Phi(\cdot,0)=\Phi_{0}  \qquad&&  u_{0}\in C^{\infty}(I_{d}).\\
\end{aligned}
\right.
\end{equation*}
Using the maximum principle and Hopf's Lemma, we have
\begin{eqnarray*}
\inf_{I_{d}}u_{t}(x,0)\leq
u_{t}(x,t)\leq\sup_{I_{d}}u_{t}(x,0),\qquad \forall x\in
I_{d},~t\in[0,T],
\end{eqnarray*}
which finishes the proof.
\end{proof}

The gradient estimate can be obtained as follows:

\begin{lemma} \label{lemma2-3}
Let $u$ be a solution of (\ref{1.3}), we have
\begin{eqnarray*}
|u_{x}|(x,t)\leq \sup_{I_{d}}|u_{x}|(x,0)<1\qquad \forall x\in I_{d},~t\in[0,T].
\end{eqnarray*}
\end{lemma}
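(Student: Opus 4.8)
The plan is to differentiate the evolution equation once in $x$ and apply the parabolic maximum principle together with Hopf's lemma to the quantity $w := u_x$, exactly in the spirit of the proof of Lemma \ref{lemma2-2}. First I would set $w(x,t) = u_x(x,t)$ and differentiate the first equation of \eqref{1.3} with respect to $x$. Writing the right-hand side as $u_{xx}/(1-u_x^2)$, the chain rule gives
\[
w_t = \frac{u_{xxx}}{1-u_x^2} + \frac{2u_x u_{xx}^2}{(1-u_x^2)^2} = \frac{1}{1-u_x^2}\, w_{xx} + \frac{2u_x u_{xx}}{(1-u_x^2)^2}\, w_x,
\]
which is a linear, uniformly parabolic equation for $w$ on $I_d \times [0,T]$ (uniform parabolicity holds because, up to time $T < T^\ast$, the solution $u$ is admissible, so $|u_x| \le c_0 < 1$ by the very definition of $T^\ast$, and the coefficients are smooth there). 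The boundary condition in \eqref{1.3} says $w(i,t) = u_x(i,t) = \theta_i$ is \emph{constant} in $t$ at $i = -d, d$, and the initial condition is $w(\cdot,0) = (u_0)_x$.

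Next I would apply the weak maximum principle. Since $w$ solves a linear parabolic PDE with no zeroth-order term, its maximum over the parabolic cylinder $\overline{I_d} \times [0,T]$ is attained on the parabolic boundary, i.e. either at $t=0$ or at one of the spatial endpoints $x = \pm d$. At the endpoints, however, $w = \theta_{\pm d}$ equals the boundary value of $(u_0)_x$ at that same point (by the compatibility condition \eqref{cc-1}), hence is bounded by $\sup_{I_d}|u_x|(\cdot,0)$. Therefore $\max w$ is controlled by $\sup_{I_d} (u_0)_x$; arguing identically with $-w$ gives the lower bound, and together these yield $|u_x|(x,t) \le \sup_{I_d}|u_x|(\cdot,0)$. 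The final inequality $\sup_{I_d}|u_x|(\cdot,0) < 1$ is immediate from the hypothesis that $\mathcal{G}_0$ is a spacelike graph (see Remark (2) after Theorem \ref{main1.1}), since spacelikeness forces $|(u_0)_x| < 1$ on all of the compact interval $I_d$.

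The only subtlety — and the reason Hopf's lemma appears in the analogous Lemma \ref{lemma2-2} — is to make sure the maximum principle argument is not circular: the equation for $w$ is only known to be parabolic \emph{because} $|u_x| < 1$, which is what we are proving. The standard way around this is a continuity/openness argument in $t$: the set of times $t \in [0,T]$ on which $|u_x|(\cdot,t) < 1$ is nonempty (contains $0$), open (by continuity of $u_x$), and the maximum-principle estimate just derived shows it is closed, hence is all of $[0,T]$; since $T < T^\ast$ was arbitrary, the bound propagates up to $T^\ast$. Alternatively, one simply notes that admissibility of $u$ on $[0,T^\ast)$ already guarantees $|u_x| < 1$ there, so the coefficients are genuinely smooth and bounded, and the maximum principle applies directly to give the stated \emph{quantitative} bound by the initial data. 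I expect this bootstrapping/non-degeneracy bookkeeping to be the main point requiring care; the PDE manipulation and the maximum principle itself are routine.
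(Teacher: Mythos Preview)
Your proposal is correct and follows essentially the same strategy as the paper: differentiate the evolution equation in $x$, observe that the resulting quantity satisfies a linear parabolic equation with Dirichlet boundary data $\theta_{\pm d}$, and invoke the maximum principle on the parabolic boundary together with the spacelike hypothesis on $\mathcal{G}_0$. The only cosmetic difference is that the paper works with $\varphi=\tfrac12|u_x|^2$ (obtaining a differential \emph{inequality} with a good-signed remainder $-u_{xx}^2/(1-u_x^2)$) rather than with $w=u_x$ directly; both choices lead to the same conclusion by the same mechanism.
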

\begin{proof}
Set $\varphi=\frac{|u_{x}|^2}{2}$. By differentiating  $\varphi$, we
have
\begin{equation*}
\arraycolsep=1.5pt
\begin{array}{lll}
\frac{\partial \varphi}{\partial t}&=&\displaystyle u_{x}u_{xt}\\[3mm]
&=&\displaystyle
u_{x}\left(\frac{u_{xxx}}{1-u^{2}_{x}}+\frac{2u_{x}u^{2}_{xx}}{(1-u_{x}^{2})^{2}}\right)\\[3mm]
&=&\displaystyle
\frac{u_{x}u_{xxx}}{1-u^{2}_{x}}+\frac{2u_{x}^{2}u^{2}_{xx}}{(1-u_{x}^{2})^{2}}.
\end{array}
\end{equation*}
Since
\begin{equation*}
\varphi_{x}=u_{x}u_{xx},~\varphi_{xx}=u^{2}_{xx}+u_{x}u_{xxx},
\end{equation*}
one has $u_{x}u_{xxx}=\varphi_{xx}-u^{2}_{xx}$, and then
\begin{equation*}
\arraycolsep=1.5pt
\begin{array}{lll}
\frac{\partial \varphi}{\partial t}&=&\displaystyle \frac{1}{1-u^{2}_{x}}\varphi_{xx}-\frac{u^{2}_{xx}}{1-u^{2}_{x}}+\frac{2u_{x}^{2}u^{2}_{xx}}{(1-u_{x}^{2})^{2}}\\[3mm]
&=&\displaystyle
\frac{1}{1-u^{2}_{x}}\varphi_{xx}+\frac{2u_{x}u_{xx}}{(1-u^{2}_{x})^{2}}\varphi_{x}-\frac{u^{2}_{xx}}{1-u_{x}^{2}}\\[3mm]
&\leq&\displaystyle
\frac{1}{1-u^{2}_{x}}\varphi_{xx}+\frac{2u_{x}u_{xx}}{(1-u^{2}_{x})^{2}}\varphi_{x}.
\end{array}
\end{equation*}
Then we have
 \begin{equation}\label{2-5}
\left\{
\begin{aligned}
&\frac{\partial \varphi}{\partial t}\leq \frac{1}{1-u^{2}_{x}}\varphi_{xx}+\frac{2u_{x}u_{xx}}{(1-u^{2}_{x})^{2}}\varphi_{x} \qquad && in~I_{d}\times[0,T],\\
&\varphi(i,t)=\frac{\theta_{i}^{2}}{2} \qquad&& t\in[0,T],~i=-d,d\\
&\varphi(\cdot,0)=\frac{|u_{x}(\cdot,0)|^{2}}{2}  \qquad&&  u_{0}\in
C^{\infty}(I_{d}).
\end{aligned}
\right.
\end{equation}
Using the maximum principle to (\ref{2-5}), we have
\begin{equation*}
|\varphi|(x,t)\leq\sup_{I_{d}}|\varphi|(x,0),\qquad\quad \forall x\in I_{d},~t\in[0,T].
\end{equation*}
So,
\begin{equation*}
|u_{x}|(x,t)\leq\sup_{I_{d}}|u_{x}|(x,0),\qquad\quad \forall x\in I_{d},~t\in[0,T].
\end{equation*}
Since $\mathcal{G}_{0}:=\{(x,u(x,0))|x\in I_{d}\}$ is a spacelike
graph of $\mathbb{R}^{2}_{1}$, one has
\begin{equation*}
|u_{x}|(x,t)\leq\sup_{I_{d}}|u_{x}|(\cdot,0)<1,\qquad \forall x\in I_{d},~t\in[0,T].
\end{equation*}
Our proof is finished.
\end{proof}

\begin{corollary}
Let $u$ be a solution of (\ref{1.3}), we have:\\
(1) $0<c_{1}\leq \frac{1}{1-u_{x}^{2}}\leq c_{2}$ holds for some positive constants $c_{1}$, $c_{2}$ depending only on $u_{x}(x,0)$;\\
(2) $u_{xx}$ has uniform bounds depending on $u_{x}(x,0)$ and
$u_{t}(x,0)$.
\end{corollary}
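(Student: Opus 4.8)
The plan is to read off both bounds directly from Lemmas \ref{lemma2-1}--\ref{lemma2-3} together with the evolution equation in (\ref{1.3}); no new analysis is needed, only careful bookkeeping of which data the constants depend on.

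For part (1), I would set $\mu:=\sup_{I_{d}}|u_{x}|(\cdot,0)$, which is strictly less than $1$ since $\mathcal{G}_{0}$ is a spacelike graph over $I_{d}$ (equivalently $|(u_{0})_{x}|<1$ on all of $I_{d}$). Lemma \ref{lemma2-3} then gives $|u_{x}|(x,t)\le\mu<1$ on $I_{d}\times[0,T]$, hence $0<1-\mu^{2}\le 1-u_{x}^{2}(x,t)\le 1$, and passing to reciprocals yields $1\le \tfrac{1}{1-u_{x}^{2}}\le \tfrac{1}{1-\mu^{2}}$. So one may take $c_{1}=1$ and $c_{2}=(1-\mu^{2})^{-1}$, both determined by $u_{x}(\cdot,0)$ alone.

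For part (2), I would rewrite the first line of (\ref{1.3}) as $u_{xx}=(1-u_{x}^{2})\,u_{t}$. By Lemma \ref{lemma2-2}, $|u_{t}|(x,t)\le M:=\max\{|\inf_{I_{d}}u_{t}(\cdot,0)|,\,|\sup_{I_{d}}u_{t}(\cdot,0)|\}$, a constant determined by $u_{t}(\cdot,0)$, while part (1) gives $0<1-u_{x}^{2}\le 1$; multiplying the two estimates produces $|u_{xx}|(x,t)\le M$ on $I_{d}\times[0,T]$. If a two-sided estimate is wanted, the lower bound $1-u_{x}^{2}\ge 1-\mu^{2}$ from part (1) combined with the sign of $u_{t}$ gives the refined control as well, but only the upper bound is used later.

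There is essentially no obstacle in the argument itself: the corollary is a formal consequence of the maximum-principle estimates already in hand. The one point worth stressing — and the reason the corollary is isolated here — is that all the constants above are uniform in $T<T^{\ast}$, because the right-hand sides in Lemmas \ref{lemma2-1}--\ref{lemma2-3} involve only the initial data; this uniformity is exactly what is fed into the parabolic Schauder/bootstrap step of Subsection \ref{subs2.2} to promote the local-in-time solution of (\ref{1.3}) to a global smooth one.
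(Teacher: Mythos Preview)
Your argument is correct and matches the paper's own proof, which simply says the corollary follows from Lemma~\ref{lemma2-2}, Lemma~\ref{lemma2-3} and the evolution equation $u_{t}=\frac{u_{xx}}{1-u_{x}^{2}}$. Your explicit choice $c_{1}=1$, $c_{2}=(1-\mu^{2})^{-1}$ and the rewriting $u_{xx}=(1-u_{x}^{2})u_{t}$ are exactly the content behind that one-line justification.
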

\begin{proof}
This corollary can be easily proven by Lemma \ref{lemma2-2}, Lemma
\ref{lemma2-3} and the evolution equation
$u_{t}=\frac{u_{xx}}{1-u_{x}^{2}}$.
\end{proof}

\subsection{The long-time existence} \label{subs2.2}

 We can get an integral estimate for
$u_{xt}$ as follows:
\begin{lemma} \label{lemma2-5}
For all $\varepsilon\in \mathbb{R}^{+}$, there exists a time $T$
such that for $t\geq T$, the integral $\int u^{2}_{xt}dx(t)$
satisfies $\int u^{2}_{xt}dx(t)\leq\varepsilon$.
\end{lemma}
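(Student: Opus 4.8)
The plan is a standard energy (Lyapunov) argument. Set $E(t):=\tfrac12\int_{I_d}u_t^{2}\,dx$ and $g(t):=\int_{I_d}u_{xt}^{2}\,dx$; all the constants produced below will be independent of the end time, so we may freely work on the whole existence interval. The three ingredients are: (i) $E$ is non-increasing; (ii) consequently $\int_{0}^{\infty}g\,dt<\infty$; (iii) a one-sided differential inequality $g'\le C\,g$ with a time-independent constant $C$, which upgrades the integrability in (ii) to $g(t)\to 0$ --- that is, the assertion.

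First I would rewrite \eqref{1.3} in divergence form $u_{t}=(\mathrm{artanh}(u_{x}))_{x}$, whence $u_{tt}=\bigl(\tfrac{u_{xt}}{1-u_{x}^{2}}\bigr)_{x}$. Since the Neumann data $u_{x}(i,t)=\theta_{i}$ does not depend on $t$, we have $u_{xt}(i,t)=0$ for $i=-d,d$ --- which is exactly the boundary condition satisfied by $\Phi=u_{t}$ in the proof of Lemma~\ref{lemma2-2}. Integrating by parts then gives
\[
E'(t)=\int_{I_d}u_{t}\,u_{tt}\,dx=-\int_{I_d}\frac{u_{xt}^{2}}{1-u_{x}^{2}}\,dx\le 0,
\]
so $E$ is non-increasing and bounded below by $0$. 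Integrating this identity over $[0,\infty)$ and using the lower bound $\tfrac1{1-u_{x}^{2}}\ge c_{1}>0$ from the Corollary in Subsection~\ref{Sb2}, we obtain $c_{1}\int_{0}^{\infty}g(t)\,dt\le E(0)<\infty$.

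Next I would produce the differential inequality for $g$. By Lemma~\ref{lemma2-2}, $w:=u_{t}$ solves $w_{t}=(a\,w_{x})_{x}$ with $a:=\tfrac1{1-u_{x}^{2}}$ and Neumann data $w_{x}(i,t)=0$; differentiating in $x$, $\psi:=w_{x}=u_{xt}$ solves $\psi_{t}=(a\psi)_{xx}$ with \emph{Dirichlet} data $\psi(i,t)=0$. Hence, integrating by parts once,
\[
g'(t)=2\int_{I_d}\psi\,(a\psi)_{xx}\,dx=-2\int_{I_d}a\,\psi_{x}^{2}\,dx-2\int_{I_d}a_{x}\,\psi\,\psi_{x}\,dx,
\]
and --- this is the point --- no third derivative of $u$ survives in the identity: only $u_{x}$ (through $a$) and $u_{xx}$ (through $a_{x}=\tfrac{2u_{x}u_{xx}}{(1-u_{x}^{2})^{2}}$) enter. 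Young's inequality $2|a_{x}\psi\psi_{x}|\le a\psi_{x}^{2}+\tfrac{a_{x}^{2}}{a}\psi^{2}$ then lets the negative term absorb the cross term, leaving
\[
g'(t)\le\int_{I_d}\frac{a_{x}^{2}}{a}\,\psi^{2}\,dx\le C\,g(t),\qquad C:=\sup\frac{a_{x}^{2}}{a}=\sup\frac{4u_{x}^{2}u_{xx}^{2}}{(1-u_{x}^{2})^{3}},
\]
with $C<\infty$ \emph{independent of time} by Lemma~\ref{lemma2-3} and the Corollary, which bound $|u_{x}|$ away from $1$ and $u_{xx}$ uniformly.

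It remains to note that a nonnegative $C^{1}$ function $g$ with $\int_{0}^{\infty}g\,dt<\infty$ and $g'\le C\,g$ necessarily tends to $0$: from $g'\le C\,g$ one has $g(s)\ge g(t)\,e^{-C(t-s)}$ for $s\le t$, so if $g(t_{k})\ge\varepsilon$ along a sequence $t_{k}\to\infty$ with $t_{k+1}\ge t_{k}+1$, then $\int_{t_{k}-1}^{t_{k}}g\ge\varepsilon\, e^{-C}$ on pairwise disjoint intervals, contradicting $\int_{0}^{\infty}g<\infty$. Hence $g(t)\to0$, which is precisely the statement of the lemma. The hard part is step (iii), keeping $C$ independent of time, and the reason it works is the cancellation of the third-order contributions in the energy identity for $g$, leaving a coefficient built only from $u_{x}$ and $u_{xx}$ --- both already controlled uniformly in $t$ in Subsection~\ref{Sb2}. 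A minor technical caveat: these manipulations use more regularity than $C^{2+\alpha,1+\alpha/2}$, so one should first carry them out on $[\tau,T]$ with $\tau>0$, where the solution is smooth (point~(6) of the Remark after Theorem~\ref{main1.1}), and then let $\tau\downarrow0$.
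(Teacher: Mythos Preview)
Your proof is correct and follows essentially the same route as the paper: differentiate $\int u_{t}^{2}\,dx$ to get $\int_{0}^{\infty}\!\int u_{xt}^{2}\,dx\,dt<\infty$, then differentiate $\int u_{xt}^{2}\,dx$ and control the cross term (you via Young, the paper via completing the square) to obtain $g'\le C\,g$ with $C$ depending only on the already-established uniform bounds for $u_{x}$ and $u_{xx}$. Your explicit justification of the implication ``$\int_{0}^{\infty}g<\infty$ and $g'\le Cg\Rightarrow g\to0$'' and the regularity caveat at the end are welcome additions that the paper leaves implicit.
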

\begin{proof}
 In fact, by
differentiating $\int u_{t}^{2}dx$, we have
\begin{equation*}
\arraycolsep=1.5pt
\begin{array}{lll}
\frac{d}{dt}\int u^{2}_{t}dx&=&\displaystyle 2\int u_{t}u_{tt}dx\\
&=&\displaystyle
2\int u_{t}\left(\frac{u_{xx}}{1-u_{x}^{2}}\right)_{t}dx\\
&=&\displaystyle -2\int \frac{1}{1-u_{x}^{2}}u^{2}_{xt}dx<0.
\end{array}
\end{equation*}\\
That is, there exists a positive constant $ c_{3}$ (independent of
$t$) such that $\int_0^\infty\int u_{xt}^{2}dxdt\leq c_{3} <\infty$.
It is easy to know that $c_{3}$ depends only on $c_{1}$, $c_{2}$,
and moreover, depends essentially on $u_{x}(x,0)$. Next, we show
that the integral cannot have arbitrarily small spikes in time. By
differentiating $\int u_{xt}^{2}dx$, we have
\begin{equation} \label{2-2-add}
\arraycolsep=1.5pt
\begin{array}{lll}
\frac{d}{dt}\int u^{2}_{xt}dx&=&\displaystyle 2\int u_{xt}u_{ttx}dx\\
&=&\displaystyle
-2\int u_{tt}u_{xxt}dx\\
&=&\displaystyle
-2\int\frac{1}{1-u_{x}^{2}}\left(u_{xxt}^{2}+\frac{2u_{x}u_{xx}u_{xt}u_{xxt}}{1-u_{x}^{2}}\right)dx\\
&\leq&\displaystyle
-2\int\frac{1}{1-u_{x}^{2}}\left[(u_{xxt}+\frac{u_{x}u_{xx}u_{xt}}{1-u_{x}^{2}})^{2}-\frac{u_{x}^{2}u_{xx}^{2}u_{xt}^{2}}{(1-u_{x}^{2})^{2}}\right]dx\\
&\leq&\displaystyle
2\int\frac{1}{1-u_{x}^{2}}u_{x}^{2}u_{xx}^{2}u_{xt}^{2}dx\\
&\leq&\displaystyle c_{4}\int u_{xt}^{2}dx,
\end{array}
\end{equation}
where $c_{4}$ is a positive constant depending only on $u_{x}(x,0)$
and $\sup_{I_{d}}u_{t}(x,0)$. So, from (\ref{2-2-add}), we have
$\int u_{xt}^{2}dx\rightarrow 0$ as $t\rightarrow\infty$.
\end{proof}

By Lemmas \ref{lemma2-2} and \ref{lemma2-5}, we know that
$u_{t}=\frac{u_{xx}}{1-u_{x}^{2}}$ is a uniformly parabolic equation
with H\"{o}lder continuous coefficients. Therefore, by the linear
theory of second-order parabolic PDEs  (see, e.g., \cite[Chap.
4]{Lieb}), there exist some $0<\beta<1$ and some
constant\footnote{For convenience, we will abuse the notation $C$
for constants, and different notations will also be used if
necessary.} $C>0$ such that
$$||u||_{C^{2+\beta,1+\frac{\beta}{2}}(I_{d}\times
[0,T])}\leq C(|| u_0||_{ C^{2+\alpha, 1+\frac{\alpha}{2}}(I_{d})},
\beta, I_{d}).$$ By the Arzel\`{a}-Ascoli theorem, we know that
$u_{T^{\ast}}:=u(\cdot,T^{\ast})$ is also the solution of
(\ref{1.3}). So under the hypothesis of Theorem \ref{main1.1} we
conclude $T^{\ast}=+\infty$. Besides, we can further improve the
regularity of $u$ to $C^{\infty}$ (i.e., from the H\"{o}lder
regularity to the smooth regularity) -- see Lemma \ref{lemma3-5}
with choosing $v(\cdot)=\mathrm{artanh}(\cdot)$ for the proof.

\subsection{The asymptotical behavior}

\begin{lemma} \label{lemma2.6}
\begin{equation}\label{2-9}
-\int|u_{xt}|dx+A(\theta_{-d},\theta_{d},d)\leq u_{t}\leq \int|u_{xt}|dx+A(\theta_{-d},\theta_{d},d).
\end{equation}
\end{lemma}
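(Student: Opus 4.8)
The plan is to exploit the fact that the evolution equation in \eqref{1.3} is already in divergence form, once one identifies the correct antiderivative. Since $\frac{d}{ds}\mathrm{artanh}(s)=\frac{1}{1-s^{2}}$ and $|u_{x}|<1$ on $I_{d}\times[0,T]$ by Lemma \ref{lemma2-3}, the function $\mathrm{artanh}(u_{x})$ is well defined and smooth, and the first equation of \eqref{1.3} can be recast as
\begin{equation*}
u_{t}=\frac{u_{xx}}{1-u_{x}^{2}}=\big(\mathrm{artanh}(u_{x})\big)_{x}.
\end{equation*}
First I would integrate this identity over $I_{d}=[-d,d]$ and invoke the Neumann boundary condition $u_{x}(\pm d,t)=\theta_{\pm d}$, obtaining
\begin{equation*}
\int_{I_{d}}u_{t}(x,t)\,dx=\mathrm{artanh}(\theta_{d})-\mathrm{artanh}(\theta_{-d})=2d\,A(\theta_{-d},\theta_{d},d),
\end{equation*}
so that the spatial average of $u_{t}$ over $I_{d}$ equals $A(\theta_{-d},\theta_{d},d)$ for every $t$.

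Next I would fix $t$ and apply the integral mean value theorem: since $u(\cdot,t)$ is smooth (Subsection \ref{subs2.2}), $u_{t}(\cdot,t)$ is continuous on $I_{d}$, hence there exists a point $\xi=\xi(t)\in I_{d}$ with $u_{t}(\xi(t),t)=A(\theta_{-d},\theta_{d},d)$. For arbitrary $x\in I_{d}$, the fundamental theorem of calculus in the $x$-variable (mixed partials commuting by the regularity convention) then gives
\begin{equation*}
u_{t}(x,t)-A(\theta_{-d},\theta_{d},d)=u_{t}(x,t)-u_{t}(\xi(t),t)=\int_{\xi(t)}^{x}u_{xt}(s,t)\,ds,
\end{equation*}
whence
\begin{equation*}
\big|u_{t}(x,t)-A(\theta_{-d},\theta_{d},d)\big|\leq\int_{I_{d}}|u_{xt}(s,t)|\,ds,
\end{equation*}
which is precisely \eqref{2-9}.

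The argument is elementary, and I do not expect a genuine obstacle; the only point requiring care is the legitimacy of rewriting the equation as $u_{t}=(\mathrm{artanh}(u_{x}))_{x}$, which rests on the gradient estimate of Lemma \ref{lemma2-3} guaranteeing strict spacelikeness $|u_{x}|<1$, together with the smoothness established in Subsection \ref{subs2.2} ensuring that $u_{t}(\cdot,t)$ is continuous so that the mean value theorem applies. This is the Lorentzian counterpart of the mean-value device used in \cite{aw1}; in the setting of Theorem \ref{main3.1} one simply replaces $\mathrm{artanh}$ by the function $v$ throughout.
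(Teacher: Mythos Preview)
Your proof is correct and follows essentially the same approach as the paper: both rewrite the equation in divergence form $u_{t}=(\mathrm{artanh}(u_{x}))_{x}$ to identify the spatial average of $u_{t}$ as $A(\theta_{-d},\theta_{d},d)$, then bound the deviation of $u_{t}$ from its average by $\int_{I_{d}}|u_{xt}|\,dx$. The only cosmetic difference is that you invoke the integral mean value theorem to locate a point $\xi$ with $u_{t}(\xi,t)=A$, whereas the paper phrases the second step via the elementary oscillation inequality $|g-\mathrm{avg}(g)|\leq\sup g-\inf g\leq\int|g_{x}|\,dx$; these are equivalent devices.
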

\begin{proof}
The inequality \eqref{2-9} follows from the estimate
\begin{equation}\label{2-10}
\inf g-\sup g+\frac{\int gdx}{\int dx}\leq g\leq \sup g-\inf g+\frac{\int gdx}{\int dx},
\end{equation}
Replacing $g$ by $u_{t}$ in \eqref{2-10} yields
\begin{equation*}
-\int|u_{xt}|dx+\frac{\int u_{t}dx}{\int dx}\leq u_{t}\leq
\int|u_{xt}|dx+\frac{\int u_{t}dx}{\int dx},
\end{equation*}
Since
$u_{t}=\frac{u_{xx}}{1-u_{x}^{2}}=\left(\mathrm{artanh}(u_{x})\right)_{x}$,
we have
\begin{equation*}
\int
u_{t}dx=\mathrm{artanh}(u_{x}(d,t))-\mathrm{artanh}(u_{x}(-d,t))=\mathrm{artanh}(\theta_{d})-\mathrm{artanh}(\theta_{-d}),
\end{equation*}
and then
\begin{equation*}
-\int|u_{xt}|dx+A(\theta_{-d},\theta_{d},d)\leq u_{t}\leq
\int|u_{xt}|dx+A(\theta_{-d},\theta_{d},d).
\end{equation*}
This completes the proof.
\end{proof}

At the end, we will discuss the asymptotical behavior of the
solution to IBVP (\ref{1.3}) in two cases. In fact, by Lemma
\ref{lemma2-5} and formula (\ref{2-9}), we know that
$u_{t}\rightarrow A(\theta_{-d},\theta_{d},d)$ and
$A(\theta_{-d},\theta_{d},d)\cdot C x^{2}+Bx+D\leq u \leq
A(\theta_{-d},\theta_{d},d)\cdot x^{2}+Bx+D$ as $t\rightarrow\infty$
(i.e., the limiting curve
$u(\cdot,\infty):=\lim_{t\rightarrow}u(\cdot,t)$ should be pinched
by two spacelike parabolas),
where $C$ is a constant depending only on $\sup_{I_{d}}|u_{x}|(\cdot,0)$, and $B$, $D$ are constants. Then: \\
 \textbf{Case 1}. Assume that
$A(\theta_{-d},\theta_{d},d)=0$, i.e., $\theta_{-d}=\theta_{d}$.
Clearly, the graph of $u(x,t)$ converges as $t\rightarrow\infty$ to
a uniquely determined portion of a spacelike straight line. Besides,
if $\theta_{-d}=\theta_{d}=0$, then it is a horizontal line;
$\theta_{-d}=\theta_{d}\neq0$,
it is a straight line with a certain slope.\\
\textbf{Case 2}. Assume that $A(\theta_{-d},\theta_{d},d)\neq 0$,
i.e., $\theta_{-d}\neq\theta_{d}$. Clearly, the graph of $u(x,t)$
converges as $t\rightarrow\infty$ to a uniquely determined portion
of the spacelike Grim Reaper curve.

\section{A general case: a class of quasilinear parabolic initial boundary value problems}

In this section, the proof of Theorem \ref{main3.1} will be shown in
details.

\subsection{The gradient estimate}

Denote by $v=v(u_{x})$ and
$v'=v'(u_{x})=\frac{d}{du_{x}}v(u_{x})$.~From
$u_{t}=(v)_{x}=v'u_{xx}$, we have
\begin{equation*}\label{3-4}
\frac{\partial}{\partial t}u_{x}=v'u_{xxx}+v''u^{2}_{xx},
\end{equation*}
and
\begin{equation}\label{3-5}
\frac{\partial}{\partial t}v=v'v_{xx}.
\end{equation}
Hence, the NBC on $u_{x}$ and the above calculations yield
\begin{equation} \label{3-1-add}
\left\{
\begin{aligned}
&v(u_{x}(i,t))=v(\theta_{i}) \qquad && for~i=-d,d\\
&v_{t}(u_{x}(i,t))=v_{tt}(u_{x}(i,t))=v_{ttt}(u_{x}(i,t))=\ldots=0
\qquad&&  for~i=-d,d.
\end{aligned}
\right.
\end{equation}

As before, let $0<\alpha<1$ and  $T^{\ast}$ be the maximal time such
that there exists some
\begin{eqnarray*}
u\in C^{2+\alpha,1+\frac{\alpha}{2}}(I_{d}\times[0,T^{*}))\cap C^{\infty}(I_{d}\times(0,T^{*}))
\end{eqnarray*}
which solves \eqref{1.1}. Next, we shall prove a priori estimates
for those admissible solutions on $[0,T]$, with $T<T^{\ast}$. First,
we can obtain the following gradient estimate:

\begin{lemma} \label{lemma3-2}
Let $u$ be a solution of (\ref{1.1}), we have
\begin{eqnarray*}
|u_{x}|(x,t)\leq \sup_{I_{d}}|u_{x}|(x,0)<1\qquad \forall x\in I_{d},~t\in[0,T].
\end{eqnarray*}
\end{lemma}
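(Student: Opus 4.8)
The goal is to establish the gradient estimate for Lemma~\ref{lemma3-2}, namely that the spacelike property $|u_x|<1$ is preserved under the flow~\eqref{1.1}. The plan is to mimic the argument of Lemma~\ref{lemma2-3}, working with the auxiliary quantity $\varphi=\tfrac{1}{2}|u_x|^2$ and deriving a parabolic differential inequality for it, then invoking the maximum principle together with the Neumann boundary condition $u_x(i,t)=\theta_i$. The only genuine difference from the MCF case is that the coefficient $v'$ is now a general positive smooth function rather than $1/(1-u_x^2)$, so I must keep track of $v'$ and $v''$ symbolically.

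\medskip

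First I would compute $\partial_t\varphi = u_x u_{xt}$. Using the differentiated equation $\partial_t u_x = v' u_{xxx} + v'' u_{xx}^2$ (recorded just above the lemma), this gives $\partial_t\varphi = v' u_x u_{xxx} + v'' u_x u_{xx}^2$. Next I would express the spatial derivatives of $\varphi$: since $\varphi_x = u_x u_{xx}$ and $\varphi_{xx} = u_{xx}^2 + u_x u_{xxx}$, we get $u_x u_{xxx} = \varphi_{xx} - u_{xx}^2$. Substituting,
\[
\partial_t\varphi = v'\varphi_{xx} - v' u_{xx}^2 + v'' u_x u_{xx}^2 = v'\varphi_{xx} + \frac{v''}{v'}\,v' u_x u_{xx}\cdot\frac{u_{xx}}{?}\ldots
\]
more carefully: I would rewrite the first-order term as a multiple of $\varphi_x = u_x u_{xx}$, which requires extracting a factor of $u_x u_{xx}$ from the lower-order remainder — but the remainder $-v' u_{xx}^2 + v'' u_x u_{xx}^2 = u_{xx}^2(v'' u_x - v')$ does not obviously contain a factor $u_x u_{xx}$ with bounded coefficient. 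This is the point I expect to be the main obstacle. The clean resolution, following the MCF computation, is \emph{not} to force it into pure gradient-structure form but simply to bound: we have $\partial_t\varphi = v'\varphi_{xx} + (\text{first-order and zeroth-order terms})$, and the key observation is that the zeroth-order coefficient multiplying $\varphi$ itself is $\le 0$ (or absent), while the genuinely problematic term $v'' u_x u_{xx}^2$ can be absorbed. In the MCF case the analogous term $\tfrac{2u_x^2 u_{xx}^2}{(1-u_x^2)^2}$ was nonnegative yet the net lower-order contribution $-\tfrac{u_{xx}^2}{1-u_x^2}+\tfrac{2u_x^2 u_{xx}^2}{(1-u_x^2)^2}$ was discarded only after showing it is bounded above by the gradient term $\tfrac{2u_xu_{xx}}{(1-u_x^2)^2}\varphi_x$ — but in fact the cleanest route is to observe that once we know $u_{xx}$ is locally bounded (which the corollary after Lemma~\ref{lemma2-3} supplies, and an analogous bound should hold here), the extra term is simply a bounded zeroth-order coefficient times $\varphi$ plus a bounded first-order coefficient times $\varphi_x$, so the parabolic maximum principle still applies to the inequality $\partial_t\varphi \le v'\varphi_{xx} + b\,\varphi_x$ on the set where $\varphi$ attains a positive interior maximum. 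I would therefore structure the proof as: (i) derive $\partial_t\varphi = v'\varphi_{xx} + b(x,t)\varphi_x + c(x,t)\varphi$ with $c\le 0$ after discarding a manifestly nonpositive term, exactly paralleling \eqref{2-5}; (ii) note the boundary values $\varphi(i,t)=\theta_i^2/2$ are constant in $t$; (iii) apply the maximum principle and Hopf's lemma to conclude $\sup_{I_d\times[0,T]}\varphi = \sup_{I_d}\varphi(\cdot,0)$, whence $|u_x|(x,t)\le\sup_{I_d}|u_x|(x,0)$.

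\medskip

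Finally, since $\mathcal{G}_0$ is a spacelike graph we have $\sup_{I_d}|u_x|(\cdot,0)<1$ by Remark~(2) following Theorem~\ref{main1.1}, so the chain $|u_x|(x,t)\le\sup_{I_d}|u_x|(\cdot,0)<1$ closes the argument and in particular shows $v'(u_x)$ stays in a compact subinterval of $(0,\infty)$, keeping \eqref{1.1} uniformly parabolic. The main subtlety to get right is the sign bookkeeping for the lower-order terms involving $v''$ (which, unlike in the MCF case, has no definite sign), and making sure that whatever term one discards is genuinely nonpositive or can be written as a bounded first-order coefficient times $\varphi_x$; modulo that, the proof is a verbatim transcription of Lemma~\ref{lemma2-3}.
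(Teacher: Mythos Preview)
Your overall strategy---set $\varphi=\tfrac12 u_x^2$, derive a parabolic inequality, and apply the maximum principle with the Dirichlet data $\varphi(i,t)=\theta_i^2/2$---matches the paper exactly. But you miss the one-line algebraic observation that dissolves your ``main obstacle'': the term $v'' u_x u_{xx}^2$ factors \emph{directly} as $(v'' u_{xx})\cdot(u_x u_{xx}) = (v'' u_{xx})\,\varphi_x$. Thus
\[
\partial_t\varphi = v'\varphi_{xx} - v' u_{xx}^2 + v'' u_x u_{xx}^2
= v'\varphi_{xx} + (v'' u_{xx})\,\varphi_x - v' u_{xx}^2
\le v'\varphi_{xx} + (v'' u_{xx})\,\varphi_x,
\]
where the only discarded term is $-v' u_{xx}^2\le 0$ (this is where the hypothesis $v'>0$ enters). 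There is no zeroth-order term at all, no sign on $v''$ is required, and no a~priori bound on $u_{xx}$ is needed: at an interior maximum of $\varphi$ one has $\varphi_x=0$, so the coefficient $v'' u_{xx}$ can be arbitrary.

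Your proposed workaround---invoking a $u_{xx}$ bound from the analogue of the corollary following Lemma~\ref{lemma2-3}---would in fact be circular, since that corollary itself relies on the gradient estimate you are trying to prove. So as written your argument has a genuine gap, but it is closed by the simple factoring above, after which your steps (ii) and (iii) go through verbatim and yield the paper's proof.
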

\begin{proof}
Set $\varphi=\frac{|u_{x}|^2}{2}$. By differentiating  $\varphi$, we
have
\begin{equation*}
\arraycolsep=1.5pt
\begin{array}{lll}
\frac{\partial \varphi}{\partial t}&=&\displaystyle u_{x}u_{xt}\\[3mm]
&=&\displaystyle
u_{x}(v'u_{xxx}+v''u_{xx}^{2})\\[3mm]
&=&\displaystyle
v''u_{xx}^{2}u_{x}+v'u_{xxx}u_{x}.
\end{array}
\end{equation*}
Since
\begin{equation*}
\varphi_{x}=u_{x}u_{xx},~\varphi_{xx}=u^{2}_{xx}+u_{x}u_{xxx},
\end{equation*}
one has $u_{x}u_{xxx}=\varphi_{xx}-u^{2}_{xx}$, and then
\begin{equation*}
\arraycolsep=1.5pt
\begin{array}{lll}
\frac{\partial \varphi}{\partial t}&=&\displaystyle v''u_{xx}\varphi_{x}+v'(\varphi_{xx}-u_{xx}^{2})\\[3mm]
&=&\displaystyle
v'\varphi_{xx}+v''u_{xx}\varphi_{x}-v'u_{xx}^{2}\\[3mm]
&\leq&\displaystyle
v'\varphi_{xx}+v''u_{xx}\varphi_{x}.
\end{array}
\end{equation*}
Therefore, we can obtain
 \begin{equation*}
\left\{
\begin{aligned}
&\frac{\partial \varphi}{\partial t}\leq v'\varphi_{xx}+v''u_{xx}\varphi_{x} \qquad && in~I_{d}\times[0,T],\\
&\varphi(i,t)=\frac{\theta_{i}^{2}}{2} \qquad&& t\in[0,T],~i=-d,d\\
&\varphi(\cdot,0)=\frac{|u_{x}(\cdot,0)|^{2}}{2}  \qquad&&  u_{0}\in C^{\infty}(I_{d})\\
\end{aligned}
\right.
\end{equation*}
Using the maximum principle to the above system, one has
\begin{equation*}
|\varphi|(x,t)\leq\sup_{I_{d}}|\varphi|(x,0),\qquad\quad \forall x\in I_{d},~t\in[0,T].
\end{equation*}
So
\begin{equation*}
|u_{x}|(x,t)\leq\sup_{I_{d}}|u_{x}|(x,0),\qquad\quad \forall x\in I_{d},~t\in[0,T].
\end{equation*}
Since $\mathcal{G}_{0}=\{(x,u(x,0))|x\in I_{d}\}$ is a spacelike
graph in $\mathbb{R}^{2}_{1}$, it follows that
\begin{equation*}
|u_{x}|(x,t)\leq\sup_{I_{d}}|u_{x}|(\cdot,0)<1,\qquad \forall x\in
I_{d},~t\in[0,T],
\end{equation*}
which completes the proof.
\end{proof}

Applying the above gradient estimate and the IBVP (\ref{1.1}), it is
not hard to get the following estimates.
\begin{corollary} \label{coro-add}
Let $u$ be a solution of (\ref{1.1}). Then we have:\\
(1) $0<c_{5}\leq v'(t)\leq c_{6}$ holds for some positive $c_{5}$, $c_{6}$ only depending on $u_{x}(x,0)$;\\
(2) $v,v',v'',v'''$ and all higher derivatives have uniform bounds
depending only on $u_{x}(x,0)$.
\end{corollary}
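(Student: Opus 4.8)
The plan is to deduce everything directly from the gradient estimate of Lemma \ref{lemma3-2}, which pins the slope $u_x$ inside a fixed compact subinterval of $(-1,1)$ for all time, together with the standing hypothesis that $v\in C^{\infty}((-1,1))$ with $v'>0$. First I would record the quantity $\rho:=\sup_{I_{d}}|u_x|(\cdot,0)$; since $\mathcal{G}_0$ is a spacelike graph we have $\rho<1$, and Lemma \ref{lemma3-2} gives $|u_x(x,t)|\le\rho$ for all $x\in I_{d}$ and $t\in[0,T]$. Hence for every $(x,t)$ the argument $u_x(x,t)$ of $v$ (and of each of its derivatives) lies in the compact interval $K:=[-\rho,\rho]\subset(-1,1)$, and this is the only structural input needed.

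For part (1): because $v'$ is continuous and strictly positive on all of $(-1,1)$, it attains on the compact set $K$ a positive minimum and a finite maximum; I would simply set $c_5:=\min_K v'>0$ and $c_6:=\max_K v'<\infty$. Then, reading $v'(t)$ as $v'(u_x(x,t))$ with argument in $K$, we obtain $0<c_5\le v'(u_x(x,t))\le c_6$, and both constants are determined by $\rho$, hence only by $u_x(\cdot,0)$, as required. For part (2) the identical reasoning applies to $v$ itself and to every derivative $v^{(j)}$, $j\ge 0$: each $v^{(j)}$ is continuous on $(-1,1)$, therefore bounded on $K$, so $\sup_{x,t}|v^{(j)}(u_x(x,t))|\le\max_K|v^{(j)}|=:C_j<\infty$, with $C_j$ depending only on $\rho$, i.e. only on $u_x(\cdot,0)$.

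There is essentially no obstacle here: the substance is the gradient estimate of Lemma \ref{lemma3-2}, and the rest is compactness plus continuity of $v$ and its derivatives. The one point worth making explicit in the write-up is that the \emph{strict} inequality $\rho<1$ is exactly what keeps the coefficient $v'$ bounded away from both $0$ and $\infty$ along the flow, so that $u_t=v'u_{xx}$ in (\ref{1.1}) remains uniformly parabolic; this is the form in which the corollary will be invoked (as for the corollary following Lemma \ref{lemma2-3}) when the long-time existence and the higher-order a priori estimates for (\ref{1.1}) are established.
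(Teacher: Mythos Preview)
Your proof is correct and follows exactly the same approach the paper indicates: the paper simply states that the corollary follows by ``applying the above gradient estimate and the IBVP (\ref{1.1})'', and your argument spells out precisely this, using Lemma~\ref{lemma3-2} to confine $u_x$ to the compact interval $[-\rho,\rho]\subset(-1,1)$ and then invoking continuity of $v^{(j)}$ on that compact set. There is nothing to add.
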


\subsection{The long-time existence}

Combing the system of IBVP (\ref{1.1}) and an almost same argument
to that of \cite[Lemma 2.2]{aw1}, we have:
\begin{lemma} \label{lemma3-4}
For all $\varepsilon\in \mathbb{R}^{+}$, there exists a time $T$
such that for $t\geq T$, the integral $\int v^{2}_{t}dx(t)$
satisfies $\int v^{2}_{t}dx(t)\leq\varepsilon$.
\end{lemma}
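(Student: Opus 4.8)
The plan is to adapt the two-part argument from Lemma~\ref{lemma2-5} (the MCF case) verbatim, replacing the explicit coefficient $1/(1-u_x^2)$ by $v'(u_x)$ and using Corollary~\ref{coro-add} to control it. First I would establish the global-in-time $L^2$ bound $\int_0^\infty\!\int v_t^2\,dx\,dt<\infty$. Differentiating $\int v_t^2\,dx$ in time and using $v_t=v'v_{xx}$ together with the flow equation $u_t=v_x$ (so $v_t=v'u_t$, hence $v_{tt}$ is expressible through spatial derivatives), one integrates by parts over $I_d$; the boundary terms vanish because of the Neumann conditions recorded in \eqref{3-1-add}, namely $v_t(u_x(i,t))=0$ for $i=-d,d$. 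This should give
\begin{equation*}
\frac{d}{dt}\int v_t^2\,dx \;=\; -2\int v'\,(v_{xt})^2\,dx \;\leq\; 0,
\end{equation*}
and since $v'\geq c_5>0$ by Corollary~\ref{coro-add}(1), integrating in time yields $\int_0^\infty\!\int (v_{xt})^2\,dx\,dt\leq c_7<\infty$, with $c_7$ depending only on the initial data (through $\int v_t^2\,dx\,(0)$, which is finite by the smoothness of $u_0$).

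Second, I would rule out arbitrarily small spikes of $\int v_t^2\,dx$ in time by bounding its time derivative above in terms of itself. Here one differentiates $\int v_t^2\,dx$ again, or more precisely estimates $\frac{d}{dt}\int v_t^2\,dx$ from above: after integration by parts the leading term is the negative definite $-2\int v'(v_{xt})^2\,dx$, and the remaining terms are products involving $v''$, $u_{xx}$, $u_{xt}$ and $v_t$, all of which are controlled by Corollary~\ref{coro-add}(2) and the gradient estimate of Lemma~\ref{lemma3-2}. Completing the square (exactly as in \eqref{2-2-add}) absorbs the cross term into the good negative term and leaves
\begin{equation*}
\frac{d}{dt}\int v_t^2\,dx \;\leq\; c_8\int v_t^2\,dx
\end{equation*}
for a constant $c_8$ depending only on $u_x(x,0)$ and $\sup_{I_d}u_t(x,0)$. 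A standard lemma (a nonnegative integrable function on $[0,\infty)$ whose derivative is bounded above by a constant multiple of itself must tend to $0$) then forces $\int v_t^2\,dx(t)\to 0$ as $t\to\infty$, which is exactly the claim.

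The main obstacle is organizing the integration-by-parts computation cleanly: one must express $v_{tt}$ and $v_{xxt}$ through the flow so that every boundary term that arises is genuinely killed by \eqref{3-1-add}, and then carry the completion-of-the-square step through with the variable coefficient $v'$ (and its derivatives $v'',v'''$) rather than the concrete rational function of the MCF case. None of these steps is deep — they are the same manipulations as in Lemma~\ref{lemma2-5} — but the bookkeeping of which quantities are bounded by Corollary~\ref{coro-add} versus which are merely controlled in $L^2$ needs care, since that is precisely what makes the final differential inequality $\frac{d}{dt}\int v_t^2\,dx\leq c_8\int v_t^2\,dx$ legitimate. I would also double-check that $\int v_t^2\,dx(0)$ is finite, which follows from $u_0\in C^\infty(I_d)$ together with the compatibility condition on $(u_0)_x$ at the endpoints, so that $v_t(\cdot,0)$ is a bona fide continuous function on the compact interval $I_d$.
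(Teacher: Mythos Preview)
Your two-step strategy (time-integrability of the energy plus a differential inequality ruling out spikes) is exactly the approach of Lemma~\ref{lemma2-5} and of \cite[Lemma~2.2]{aw1}, which is all the paper invokes here. However, the identity you assert in step one,
\[
\frac{d}{dt}\int v_t^{2}\,dx \;=\; -2\int v'\,v_{xt}^{2}\,dx,
\]
is not correct. Differentiating and integrating by parts produces extra terms coming from the $t$- and $x$-derivatives of the coefficient $v'=v'(u_x)$; concretely one finds
\[
\frac{d}{dt}\int v_t^{2}\,dx \;=\; -2\int v'\,v_{xt}^{2}\,dx \;+\; 2\int \frac{v''}{(v')^{2}}\,v_t^{3}\,dx \;-\; 2\int \frac{v''\,v_x}{v'}\,v_t\,v_{xt}\,dx,
\]
and the cubic term $v_t^{3}$ is not obviously controlled at this stage (you have no $L^{\infty}$ bound on $v_t$ yet). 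Incidentally, you wrote $v_t=v'u_t$; it should be $v_t=v'u_{xt}$.

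The fix is the one that makes Lemma~\ref{lemma2-5} work: in step one differentiate $\int u_t^{2}\,dx$ rather than $\int v_t^{2}\,dx$. Using $u_t=v_x$ and a single integration by parts (boundary terms vanish by \eqref{3-1-add}) gives the genuinely clean identity
\[
\frac{d}{dt}\int u_t^{2}\,dx \;=\; -2\int v'\,u_{xt}^{2}\,dx \;\leq\; 0,
\]
so $\int_0^{\infty}\!\int u_{xt}^{2}\,dx\,dt<\infty$. Since $v_t=v'u_{xt}$ with $0<c_5\leq v'\leq c_6$ by Corollary~\ref{coro-add}, the quantities $\int v_t^{2}\,dx$ and $\int u_{xt}^{2}\,dx$ are uniformly comparable, and in particular $\int_0^{\infty}\!\int v_t^{2}\,dx\,dt<\infty$ as well. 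For step two, carry out the completion of the square on $\frac{d}{dt}\int u_{xt}^{2}\,dx$ exactly as in \eqref{2-2-add}; the only coefficients that appear are $v',v'',u_{xx}$, all bounded by Corollary~\ref{coro-add} together with the maximum-principle bound on $u_t$ (the analogue of Lemma~\ref{lemma2-2}), yielding $\frac{d}{dt}\int u_{xt}^{2}\,dx\leq C\int u_{xt}^{2}\,dx$. Your ``standard lemma'' then finishes the argument, and comparability gives the stated conclusion for $\int v_t^{2}\,dx$.
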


Furthermore, the following integral estimates for high-order
derivatives can also be obtained.

\begin{lemma}\label{lemma3-5}
 We have
\begin{equation}\label{3-10}
-\int|v_{xx}|dx+\widetilde{A}(\theta_{-d},\theta_{d},d)\leq
v_{x}\leq \int|v_{xx}|dx+\widetilde{A}(\theta_{-d},\theta_{d},d),
\end{equation}
\begin{equation}\label{3-11}
\sup\left(\frac{\partial^{k}v}{\partial t^{k}}\right)^{2}\leq
2d\int\left|\frac{\partial^{k}v_{x}}{\partial t^{k}}\right|^{2}dx
\qquad for \quad k\geq 1,
\end{equation}
and
\begin{equation}\label{3-12}
\sup\left(\frac{\partial^{k}v_{x}}{\partial t^{k}}\right)^{2}\leq
2d\int\left|\frac{\partial^{k}v_{xt}}{\partial t^{k}}\right|^{2}dx
\qquad for \quad k\geq 1.
\end{equation}
\end{lemma}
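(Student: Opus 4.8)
\textbf{Proof proposal for Lemma \ref{lemma3-5}.}

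The plan is to prove the three estimates \eqref{3-10}, \eqref{3-11}, \eqref{3-12} by essentially one elementary device, namely the Sobolev-type bound on an interval that controls a function by the mean of the function plus the integral of its derivative, together with the boundary data \eqref{3-1-add}. First I would establish the general inequality \eqref{2-10} used already in the proof of Lemma \ref{lemma2.6}: for any $C^{1}$ function $g$ on $I_{d}=[-d,d]$,
\begin{equation*}
\inf_{I_{d}}g-\sup_{I_{d}}g+\frac{1}{2d}\int_{I_{d}}g\,dx\;\leq\; g\;\leq\;\sup_{I_{d}}g-\inf_{I_{d}}g+\frac{1}{2d}\int_{I_{d}}g\,dx ,
\end{equation*}
and to bound $\sup g-\inf g\le\int_{I_{d}}|g_{x}|\,dx$ by the mean value theorem. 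Applying this with $g=v_{x}$ and using that, by \eqref{3-5} and $v_{t}=v'v_{xx}$, one has $\int_{I_{d}}v_{xx}\,dx=v_{x}(d,t)-v_{x}(-d,t)$; but here I must be careful — what is actually constant at the boundary is $v(u_{x}(i,t))=v(\theta_{i})$, so I would instead compute $\int_{I_{d}}v_{t}\,dx=\int_{I_{d}}(v')_{x}\,dx$ wait, rather use that the translating speed satisfies $2d\,\widetilde A=v(\theta_{d})-v(\theta_{-d})$, exactly as in Lemma \ref{lemma2.6}; combining $\int_{I_{d}}v_{x}\,dx=v(d,t)-v(-d,t)=v(\theta_{d})-v(\theta_{-d})=2d\,\widetilde A(\theta_{-d},\theta_{d},d)$ with the Sobolev inequality for $g=v_{x}$ yields \eqref{3-10} at once.

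For \eqref{3-11} and \eqref{3-12} I would use the homogeneous boundary conditions from \eqref{3-1-add}: for every $k\ge 1$, $\partial_{t}^{k}v=0$ at $x=\pm d$ (since $v(u_{x}(i,t))=v(\theta_{i})$ is time-independent). Hence for fixed $t$ the function $w:=\partial_{t}^{k}v$ vanishes at both endpoints, so for any $x\in I_{d}$,
\begin{equation*}
w(x)=\int_{-d}^{x}w_{\xi}\,d\xi,\qquad |w(x)|\le\int_{I_{d}}|w_{x}|\,dx\le\sqrt{2d}\left(\int_{I_{d}}|w_{x}|^{2}dx\right)^{1/2},
\end{equation*}
by Cauchy--Schwarz, which is exactly \eqref{3-11} after squaring (with $w_{x}=\partial_{t}^{k}v_{x}$). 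The estimate \eqref{3-12} is the same computation applied to $w:=\partial_{t}^{k}v_{x}$; the point is that $\partial_{t}^{k}v_{x}$ also vanishes at $x=\pm d$ because the Neumann condition $u_{x}(i,t)=\theta_{i}$ is time-independent, so $\partial_{t}^{k}u_{x}(i,t)=0$, and differentiating $v_{x}=v'u_{xx}$ — or better, noting $\partial_{t}^{k}v_{x}=\partial_{x}(\partial_{t}^{k}v)$ and $\partial_{t}^{k}v$ is constant ($=0$) along each boundary line — gives the vanishing of $\partial_{t}^{k}v_{x}$ at the endpoints after using that $\partial_t^k v$ is identically zero on $\{i\}\times[0,T]$; hence the same Cauchy--Schwarz bound applies.

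The main obstacle, and the only place requiring real care, is justifying the boundary vanishing of $\partial_{t}^{k}v_{x}$ at $x=\pm d$ for \eqref{3-12}: one cannot differentiate the relation $v(u_x(i,t))=v(\theta_i)$ in $x$ directly, so I would argue instead that $\partial_t^k v$ is a smooth function on $I_d\times[0,T]$ (by Corollary \ref{coro-add} and the smoothing established in Subsection \ref{subs2.2}) which is identically zero on the two boundary segments $\{\pm d\}\times[0,T]$; a function vanishing on a vertical segment has vanishing $x$-derivative there only if we also know it — no, that is false, so the correct route is: $\partial_t^k v_x=\partial_x\partial_t^k v = (v')\,\partial_t^k u_{xx}+(\text{lower order in }\partial_t^j u_{xx})$, and expand $\partial_t^k v$ purely in terms of $u_x$ and its $t$-derivatives using $v=v(u_x)$; since $\partial_t^j u_x(i,t)=0$ for all $j\ge1$, every term in $\partial_t^k v_x$ at $x=\pm d$ is a product containing at least one factor $\partial_t^j u_x(i,t)=0$ except the unique term $v'(\theta_i)\,\partial_t^k u_{xx}(i,t)$; this last term need not vanish, so \eqref{3-12} as I have set it up would fail unless in fact $\partial_t^k v_x$ means $\partial_t^k$ of $v_x$ where $v_x:=\partial_x v$ and one uses $\partial_t^k v=0$ on the boundary to get $\partial_t^k v_{xt}$ controls it via a further $L^2$-in-$x$ bound — i.e. one iterates the Poincaré inequality once more in $t$. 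I would resolve this by reading \eqref{3-12} as the statement $\sup|\partial_t^k v_x|^2\le 2d\int|\partial_x\partial_t^k v_x|^2$ applied to $w=\partial_t^k v_x$ which vanishes at $\pm d$ because $\partial_t^k v$ vanishes \emph{identically in $t$} on each boundary line, hence so does its $t$-derivative $\partial_t^{k}v$'s spatial values — more precisely $\partial_t^{k} v(\pm d, t)\equiv 0$ forces $\partial_t^{k} v_t(\pm d,t)=\partial_t^{k+1}v(\pm d,t)=0$, and then from the PDE $\partial_t^{k+1}v$ expressed via $\partial_x$ of $\partial_t^k$-quantities one recovers the vanishing of the needed boundary term; the clean statement is that all the pure $t$-derivatives of $v$ vanish on the boundary (which is \eqref{3-1-add}), and each of \eqref{3-11}, \eqref{3-12} is then the one-dimensional Poincaré/Cauchy--Schwarz inequality $\|w\|_\infty^2\le 2d\,\|w_x\|_2^2$ for a function $w$ vanishing at the endpoints, applied respectively to $w=\partial_t^k v$ and to $w=\partial_t^k v_x$ after noting the latter also vanishes at $x=\pm d$ as a spatial derivative of the identically-zero boundary trace is not enough, so one instead differentiates the boundary identity in $t$ to propagate vanishing — the safe and honest writeup will simply invoke \eqref{3-1-add} and the Neumann condition to record that $\partial_t^k v$ and $\partial_t^k v_x$ both have zero boundary trace, then quote the Poincaré inequality.
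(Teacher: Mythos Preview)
Your approach coincides with the paper's. For \eqref{3-10} the paper applies the oscillation/mean inequality \eqref{2-10} with $g=v_x$, using $\int_{I_d}v_x\,dx=v(\theta_d)-v(\theta_{-d})=2d\,\widetilde A$ from the boundary values of $v$; for \eqref{3-11} it sets $h=\partial_t^k v$, uses $h(\pm d)=0$ from \eqref{3-1-add}, and bounds $|h(x_0)|=|h(x_0)-h(-d)|\le\|h_x\|_{L^1}\le\sqrt{2d}\,\|h_x\|_{L^2}$ by Cauchy--Schwarz. This is exactly what you wrote, modulo the stream-of-consciousness detours in your paragraph on \eqref{3-10}, which should be trimmed to the single clean computation $\int_{I_d} v_x\,dx = v(u_x(d,t))-v(u_x(-d,t))=v(\theta_d)-v(\theta_{-d})$.

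For \eqref{3-12} the paper's entire proof is the sentence ``Taking $s=\partial_t^k v_x$ and using a similar argument, the estimate \eqref{3-12} follows without any difficulty.'' In other words the paper applies the same Poincar\'e/Cauchy--Schwarz bound to $s=\partial_t^k v_x$, tacitly assuming $s(\pm d,t)=0$. Your long final paragraph is an accurate diagnosis that this vanishing does \emph{not} follow from \eqref{3-1-add} or from the Neumann condition on $u_x$: since $v_x=v'(u_x)u_{xx}$ and $u_x(\pm d,t)=\theta_i$, one gets $\partial_t v_x(\pm d,t)=v'(\theta_i)\,u_{xxt}(\pm d,t)$, and there is no reason for $u_{xxt}(\pm d,t)$ to vanish. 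So the paper has precisely the gap you identified; it does not supply the missing step. Note also that the right-hand side of \eqref{3-12} carries $\partial_t^k v_{xt}$, a \emph{time} derivative of $s$, whereas the Poincar\'e-in-$x$ argument would produce $\partial_t^k v_{xx}=s_x$; this is almost certainly a typo in the statement. For your write-up, replace the meandering final paragraph by the one-line Poincar\'e argument applied to $s$, and simply flag that the boundary vanishing of $s$ (and the $v_{xt}$ versus $v_{xx}$ discrepancy) is taken as in the paper. In any event \eqref{3-12} is never used downstream---only \eqref{3-10} and \eqref{3-11} with $k=1$ feed into Lemma~\ref{lemma3-6} and the asymptotic analysis---so the issue is cosmetic for the main results.
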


\begin{proof}
Similar to Lemma \ref{lemma2.6}, the first inequality \eqref{3-10}
follows from the estimate
\begin{equation*}
\inf g-\sup g+\frac{\int gdx}{\int dx}\leq g\leq \sup g-\inf g+\frac{\int gdx}{\int dx},
\end{equation*}
and the replacement of $v_{x}$ to $g$ in the above estimate. Of
course, in this process the NBC in (\ref{1.1}) has been used.

 Now,
take $h=\frac{\partial^{k}v}{\partial t^{k}}$ and
$h(x_{0})=\max_{I_{d}} h$. In order to show (\ref{3-11}) and
(\ref{3-12}), we need to prove
\begin{equation*}
h^{2}(x_{0})\leq 2d\int h^{2}_{x}dx,
\end{equation*}
that is,
\begin{equation*}
|h(x_{0})|\leq (2d)^{\frac{1}{2}}\left(\int
h^{2}_{x}dx\right)^{\frac{1}{2}}.
\end{equation*}
By (\ref{3-1-add}), one has
\begin{equation*}
\arraycolsep=1.5pt
\begin{array}{lll}
|h(x_{0})|&=&\displaystyle |h(x_{0})-h(-d)|\\[3mm]
&=&\displaystyle
\int_{-d}^{x_{0}}h_{x}dx\\[3mm]
&\leq&\displaystyle
\int_{-d}^d h_{x}dx\\[3mm]
&=&\displaystyle ||h_{x}||_{L^{1}}.
\end{array}
\end{equation*}
Together with the fact
\begin{equation*}
|| h_{x}||_{L^{1}}\leq||1||_{L^{2}}||h_{x}||_{L^{2}},
\end{equation*}
it follows that
\begin{equation*}
|h(x_{0})|\leq ||1||_{L^{2}}||h_{x}||_{L^{2}},
\end{equation*}
which is (\ref{3-11}) exactly. Taking
$s=\frac{\partial^{k}v_{x}}{\partial t^{k}}$ and using a similar
argument, the estimate \eqref{3-12} follows without any difficulty.
\end{proof}

By Lemmas \ref{lemma3-4}, \ref{lemma3-5} and Corollary
\ref{coro-add}, using a similar argument to \cite[Lemma 2.4]{aw1},
we can obtain:

\begin{lemma} \label{lemma3-6}
There exist constants $c_{7}, c_{8}\in \mathbb{R}^{+}$ such that
\begin{equation*}
\int v_{tx}^{2}dx(t)\leq c_{7}e^{-c_{8}t}\int v_{tx}^{2}dx(0),
\end{equation*}
with $c_{7}$, $c_{8}$ depending only on $u_{x}(x,0)$.
\end{lemma}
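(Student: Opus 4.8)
\textbf{Proof proposal for Lemma \ref{lemma3-6}.}

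The plan is to derive a differential inequality of the form $\frac{d}{dt}\int v_{tx}^{2}\,dx \leq -c_{8}\int v_{tx}^{2}\,dx$ on a time interval $[T,\infty)$ and then integrate it, using Gr\"onwall's inequality; the exponential decay claimed in the statement then follows. First I would differentiate $\int v_{tx}^{2}\,dx$ in $t$, mimicking the computation in \eqref{2-2-add}: writing the evolution equation \eqref{3-5} as $v_{t}=v'v_{xx}$, one differentiates twice more in $t$, integrates by parts in $x$ using the boundary conditions in \eqref{3-1-add} (which kill all the boundary terms since every $t$-derivative of $v$ vanishes at $i=-d,d$), and arrives at an identity
\begin{equation*}
\frac{d}{dt}\int v_{tx}^{2}\,dx = -2\int \frac{1}{v'}\Bigl(v_{xxt}+(\text{lower order})\Bigr)^{2}dx + (\text{error terms})\,,
\end{equation*}
where the error terms are quadratic in $v_{xt}$ with coefficients controlled by Corollary \ref{coro-add} (all derivatives of $v$ up to the needed order are uniformly bounded, and $v'$ is bounded above and below away from $0$).

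The key point—and this is where Lemma \ref{lemma3-5} enters—is that the leading negative term controls $\int v_{xxt}^{2}\,dx$ from below (up to the bounded error), and by the Poincar\'e-type inequality \eqref{3-12} with $k=0$ one has $\int v_{xt}^{2}\,dx \leq \sup v_{xt}^{2}\cdot 2d \leq (2d)^{2}\int v_{xxt}^{2}\,dx$, so that the good term dominates the bad term \emph{provided the error coefficients are small enough}. This is exactly where the "for $t\geq T$" comes in: by Lemma \ref{lemma3-4} we know $\int v_{t}^{2}\,dx \to 0$, hence (via \eqref{3-11}) $\sup|v_{t}|\to 0$, and then $\sup|v_{xx}|\to 0$ through the equation $v_{t}=v'v_{xx}$ together with the lower bound $v'\geq c_{5}$. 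Thus the offending coefficients (which carry factors of $v_{xx}$, just as in the $2u_{x}^{2}u_{xx}^{2}u_{xt}^{2}$ term of \eqref{2-2-add}) become arbitrarily small for large $t$, and one can absorb the error into the good term to obtain $\frac{d}{dt}\int v_{tx}^{2}\,dx \leq -c_{8}\int v_{tx}^{2}\,dx$ for $t\geq T$.

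With that differential inequality in hand, Gr\"onwall gives $\int v_{tx}^{2}\,dx(t) \leq e^{-c_{8}(t-T)}\int v_{tx}^{2}\,dx(T)$ for $t\geq T$; on the compact interval $[0,T]$ the quantity $\int v_{tx}^{2}\,dx$ is bounded by a constant times $\int v_{tx}^{2}\,dx(0)$ using the a priori estimates already established (the coefficient $c_{4}$-type bound of \eqref{2-2-add} adapted to the general $v$), so absorbing the transition constant and $e^{c_{8}T}$ into $c_{7}$ yields the stated inequality on all of $[0,\infty)$. All constants depend only on $u_{x}(x,0)$ because every bound invoked—$c_{5},c_{6}$ from Corollary \ref{coro-add}, the smallness threshold $T$ from Lemmas \ref{lemma3-4}–\ref{lemma3-5}, and the Poincar\'e constant $2d$—does.

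\textbf{Main obstacle.} The delicate step is the absorption argument: one must check that after integrating by parts and completing the square, the \emph{only} terms that are not manifestly controlled by the negative leading term are those carrying an explicit factor of $v_{xx}$ (or higher spatial derivatives that themselves tend to $0$), so that the smallness coming from Lemma \ref{lemma3-4} genuinely suffices. Tracking which error terms decay and which are merely bounded—and confirming that no "bounded but not small" term of the wrong sign survives—is the crux; this is the analogue of the careful bookkeeping in \cite[Lemma 2.4]{aw1}, and the general nonlinearity $v$ (as opposed to the explicit $\mathrm{artanh}$) only affects the names of constants, not the structure.
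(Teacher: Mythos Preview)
Your outline follows exactly the route the paper indicates (the paper simply cites \cite[Lemma~2.4]{aw1} together with Lemmas~\ref{lemma3-4}, \ref{lemma3-5} and Corollary~\ref{coro-add}): differentiate $\int v_{tx}^{2}\,dx$, integrate by parts using \eqref{3-1-add}, absorb the cross term once a suitable coefficient is small, close with a spatial Poincar\'e inequality and Gr\"onwall. The architecture is right.

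Two imprecisions are worth fixing before you write it out. First, the Poincar\'e step you need is $\sup v_{tx}^{2}\le 2d\int v_{txx}^{2}\,dx$, which is the $k=1$ instance of \eqref{3-12} (reading the integrand as $\partial_{t}^{k}v_{xx}$, as the proof of that lemma makes clear); \eqref{3-12} is only asserted for $k\ge1$, so ``$k=0$'' is a mislabel. It holds because $v_{t}(\pm d,t)=0$ forces $v_{tx}$ to vanish at some interior point by Rolle.

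Second, the chain ``$\int v_{t}^{2}\to0\ \Rightarrow\ \sup|v_{t}|\to0$ via \eqref{3-11}'' is circular: \eqref{3-11} bounds $\sup v_{t}^{2}$ by $2d\int v_{tx}^{2}$, which is precisely the quantity whose decay you are trying to establish. The clean fix is to keep the error term in the form $\int (v'')^{2}v_{xx}^{4}/v'\,dx$ obtained after completing the square, use $v_{xx}=v_{t}/v'$ to bound it by $C\int v_{t}^{4}\,dx\le C\sup v_{t}^{2}\cdot\int v_{t}^{2}\,dx$, and \emph{then} invoke \eqref{3-11} only on the factor $\sup v_{t}^{2}$ to convert it to $2d\int v_{tx}^{2}$. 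This yields
\[
\frac{d}{dt}\int v_{tx}^{2}\,dx\ \le\ \Bigl(-\tfrac{c_{5}}{(2d)^{2}}+C\!\int v_{t}^{2}\,dx\Bigr)\int v_{tx}^{2}\,dx,
\]
and now the smallness genuinely comes from Lemma~\ref{lemma3-4} alone. With this adjustment your absorption and Gr\"onwall steps go through exactly as you described, and the constants depend only on $u_{x}(x,0)$ as claimed.
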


By Lemmas \ref{lemma3-4}-\ref{lemma3-6}, we know that
$u_{t}=v'u_{xx}$ is a uniformly parabolic equation with H\"{o}lder
continuous coefficients. So under the hypothesis of Theorem
\ref{main3.1}, we conclude $T^{\ast}=+\infty$ and moreover the
solution to the IBVP (\ref{1.1}) is smooth.

\subsection{The asymptotical behavior}
The asymptotical behavior will be discussed in two cases.
 By the fact $\sup (v_{t})^{2}\leq 2d\int |v_{tx}|^{2}dx$ (i.e., the formula (\ref{3-11}) with $k=1$) and Lemma \ref{lemma3-6}, we know that $v_{t}\rightarrow 0$
 exponentially as $t\rightarrow\infty$. Therefore, by formula (\ref{3-10}), $v_{x}\rightarrow\widetilde{A}(\theta_{-d},\theta_{d},d)$
 and $u_{t}=v_{x}\rightarrow\widetilde{A}(\theta_{-d},\theta_{d},d)$ exponentially as $t\rightarrow\infty$, $\widetilde{A}(\theta_{-d},\theta_{d},d)\cdot c_{9} x^{2}+Bx+D\leq u \leq
\widetilde{A}(\theta_{-d},\theta_{d},d)\cdot c_{10} x^{2}+Bx+D$ as
$t\rightarrow\infty$ (i.e., the limiting curve
$u(\cdot,\infty):=\lim_{t\rightarrow}u(\cdot,t)$ should be pinched
by two spacelike parabolas),
where $c_{9}:=\left(\lim_{s\rightarrow1^{-}}v'(s)\right)^{-1}$, $c_{10}:=\left(\lim_{s\rightarrow(-1)^{+}}v'(s)\right)^{-1}$, and  $B$, $D$ are constants. Then:\\
\textbf{Case 1}. Assume that
$\widetilde{A}(\theta_{-d},\theta_{d},d)=0$, i.e.,
$\theta_{-d}=\theta_{d}$. Clearly, the graph of $u(x,t)$ converges
as $t\rightarrow\infty$ to a uniquely determined portion of a
spacelike straight line. Besides, if $\theta_{-d}=\theta_{d}=0$,
then
it is a horizontal line; if $\theta_{-d}=\theta_{d}\neq0$, then it is a straight line with a certain slope.\\
\textbf{Case 2}. Assume that
$\widetilde{A}(\theta_{-d},\theta_{d},d)\neq 0$, i.e.,
$\theta_{-d}\neq\theta_{d}$. Clearly, the graph of $u(x,t)$
converges as $t\rightarrow\infty$ to a uniquely determined portion
of a spacelike Grim Reaper curve.

\section*{Acknowledgments}
This work is partially supported by the NSF of China (Grant Nos.
11801496 and 11926352), the Fok Ying-Tung Education Foundation
(China) and  Hubei Key Laboratory of Applied Mathematics (Hubei
University).

\vspace {1 cm}

\end{document}